\theoremstyle{definition}
\newtheorem{definition}{Definition}
\theoremstyle{plain}
\newtheorem{theorem}{Theorem}
\newtheorem{lemma}{Lemma}
\newtheorem{proposition}{Proposition}
\newtheorem{corollary}{Corollary}
\theoremstyle{remark}
\newtheorem{remark}{Remark}
\theoremstyle{definition}
\def\N{\mathbb{N}}
\def\R{\mathbb{R}}
\begin{document}
	\title{Estimation of the Adjusted Standard-deviatile for Extreme Risks}
	\author{Haoyu Chen$^{[a]}$~ \thinspace \ Tiantian Mao$^{[a]}$~ \thinspace \
		Fan Yang$^{[b]}$ \\
		$[a]$ {\small Department of Statistics and Finance, School of Management,
			School of Data Science}\\
		{\small \ University of Science and Technology of China, Hefei 230026, P. R.
			China}\\
		$[b]${\small \ Department of Statistics and Actuarial Science, University of
			Waterloo }\\
		{\small Waterloo, ON N2L 3G1, Canada }}
	\date{{\small October 10, 2023}}
	\maketitle
	
	\begin{abstract}
	{In this paper, we modify the Bayes risk for the expectile, the so-called variantile risk measure, to better capture extreme risks. The modified risk measure is called 
                the adjusted standard-deviatile.}  First, we derive the asymptotic expansions of the adjusted standard-deviatile. Next, based on the first-order asymptotic expansion, we propose two efficient estimation methods for the adjusted standard-deviatile at intermediate and extreme levels. {By using techniques from extreme value theory, the asymptotic normality is proved for both estimators for independent and identically distributed observations and for $\beta$-mixing time series, respectively.} Simulations and real data applications are conducted to examine the performance of the proposed estimators.  
		
		\bigskip
		
		\textbf{Keywords}: Adjusted standard-deviatile, Extreme value
		statistics, Expectile, Heavy tails, Extrapolation, $\beta$-mixing
	\end{abstract}

	\section{Introduction}
	
Risk measures can be connected through the risk quadrangle proposed in
\cite{rockafellar2013fundamental}. For a random variable $X$, a risk
quadrangle is defined by the relationships between a risk $\mathcal{R}$, a
deviation $\mathcal{D}$, an error $\mathcal{E}$ and a statistic $\mathcal{S}$
as
\[
\mathcal{R}(X)=\min_{x\in \mathbb{R}}~\left \{  x+\mathcal{E}(X-x)\right \}
,\qquad \mathcal{D}(X)=\min_{x\in \mathbb{R}}~\left \{  \mathcal{E}(X-x)\right \}
,\qquad \mathcal{S}(X)=\underset{x\in \mathbb{R}}{\arg \min}~\left \{
\mathcal{E}(X-x)\right \}  .
\]
When $\mathcal{E}(X)=E\left[  L(X)\right]  $, for a loss function
$L:\mathbb{R}\rightarrow \mathbb{R}$, the risk quadrangle is called an
expectation quadrangle. \cite{wang2020risk} showed that an expectation
quadrangle has the convex level sets property, which is a necessary condition
for the notions of elicitability, identifiability, and backtestability. In the
expectation quadrangle case, the deviation $\mathcal{D}$ is called the
\emph{Bayes risk} of the statistic $\mathcal{S}$ in
\cite{FrongilloRafaelM2020Ecos}, and it was shown that the pair $(\mathcal{D}%
,\mathcal{S})$ which is called Bayes pair, is elicitable. The best known loss
function is the asymmetric $\ell_{1}$-norm and the corresponding Bayes pair is
the Expected Shortfall (ES) and the Value-at-Risk (VaR), denoted by $(\mathrm{ES}%
_{\tau},\mathrm{VaR}_{\tau})$\footnote{In the expectation quadrangle, when the loss function is the asymmetric $\ell_1$-norm loss function $L(x)=\frac{\tau}{1-\tau}x_{+}+x_{-}$, the Bayes pair $(\mathcal D,\mathcal S)$ is  $\mathcal D(X)=\mathrm{ES}_{\tau}(X)+E(X)$  and $\mathcal S(X)=\mathrm{VaR}_{\tau
}(X)$. That is in the case of $\ell_1$-norm loss function,  the Bayes risk of $\mathrm{VaR}_{\tau}$ is $\mathrm{ES}_{\tau}$   plus the expectation. In fact, in \cite{FrongilloRafaelM2020Ecos}, the Bayes pair is defined in a more general framework: for a loss function $L:\R\times\R\to\R$, the Bayes pair $(\mathcal{D}%
,\mathcal{S})$ is defined as  $\mathcal D(X)=\min_{x\in\R} L(X,x) $ and $S(X)={\rm argmin}_{x\in\R} L(X,x) $. Under this framework, taking the loss function as $L(y,x) = x+\frac1{1-\tau} (y-x)_+$, we have $(\mathcal{D},\mathcal{S})=(\mathrm{ES}_{\tau},\mathrm{VaR}_{\tau})$. }. The VaR at
confidence level $\tau \in(0,1)$ of a random variable $X$ with distribution
function $F$ is defined as $\mathrm{VaR}_{\tau}(X)=q_{\tau}(X)=\inf
\{x\in \mathbb{R}:F(x)\geq \tau \}$. The ES of $X$ at confidence level $\tau
\in(0,1)$ is defined as $\mathrm{ES}_{\tau}(X)=\frac{1}{1-\tau}\int_{\tau}%
^{1}q_{p}(X)\, \mathrm{d}p$. These two are the most popular risk
measures applied in practice and regulation.  Their properties, including asymptotic behaviors for
extreme risks and statistic inferences, have been widely explored in the
literature (see e.g., \citealp{acerbi2002coherence}; \citealp{frey2002var};  \citealp{scaillet2004nonparametric}; \citealp{so2006empirical}; \citealp{necir2010estimating};  \citealp{asimit2011asymptotics}; \citealp{hua2011second}; \citealp{chun2012conditional}).

Another
well-known loss function is the asymmetric $\ell_{2}$-norm
\begin{equation}
L_{\tau}(x)=\tau x_{+}^{2}+(1-\tau)x_{-}^{2},~~~\tau \in(0,1),\label{loss}%
\end{equation}
where $x_{+}=\max \{x,0\}$ and $x_{-}=-\min \{x,0\}$. Under the loss function $L_{\tau}$ defined by (\ref{loss}), the Bayes pair
$(\mathcal{D},\mathcal{S})$ is the variantile and the expectile, denoted by
$(var_{\tau},e_{\tau})$, where
\begin{equation}
{var}_{\tau}(X)=\min_{x\in \mathbb{R}}~E\left[  L_{\tau}(X-x)\right]  ,\qquad
e_{\tau}(X)=\underset{x\in \mathbb{R}}{\arg \min}~E\left[  L_{\tau}(X-x)\right]
.\label{bp}%
\end{equation}
By the definition of $e_{\tau}$, one immediately obtains that
\begin{equation}
{var}_{\tau}(X)=\tau{E}\left[  (X-e_{\tau})_{+}^{2}\right]  +(1-\tau
){E}\left[  (X-e_{\tau})_{-}^{2}\right]  .\label{var_tau}%
\end{equation}
The expectile was first introduced in \cite{newey1987asymmetric}. Since \cite{bellini2014generalized} studied its
properties as a risk measure and \cite{ziegel2016coherence} showed that
expectiles are the only coherent risk measures that have the elicitability
property, expectile has
attracted much more attention in different areas, especially its asymptotic behavior for
extreme risks and statistic inference have been extensively studied; see, for example, \cite{mao2015asymptotic}, \cite{mao2015risk}, \cite{cai2016optimal}, \cite{holzmann2016expectile},
 \cite{bellini2017risk}, \cite{kratschmer2017statistical},  \cite{daouia2018estimation},  \cite{daouia2020tail}, \cite{girard2020nonparametric},  \cite{daouia2021expecthill}, and \cite{eberl2021expectile}. The variantile was first introduced in \cite{FrongilloRafaelM2020Ecos} as the Bayes risk of the expectile. {In view of the definitions in \eqref{bp}, the expectile is the constant
nearest to the risk $X$ in the sense of the asymmetric $\ell_{2}$-distance.
The variantile is the minimum expected loss of $X$ for the $\ell_{2}$-norm,
and thus it is the minimum $\ell_{2}$-distance. This shows that the variantile
is a deviation measure that generalizes the variance. Especially with the
asymmetric $\ell_{2}$-distance, it puts more weight to capture the right tail
behavior, which can be very useful for the study of the extreme risks. 
 As shown in \cite{FrongilloRafaelM2020Ecos}, the Bayes pair $(var_{\tau},e_{\tau})$ is elicitable and has other properties useful for the applications in such as  optimization and machine learning.}

However, little research has
been devoted to the variantile, except the study as the Bayes pair
$(var_{\tau},e_{\tau})$
in, e.g., \cite{FrongilloRafaelM2020Ecos} and \cite{wang2020risk}.
In this paper, we focus on the asymptotic behaviour and the estimation
of the modified variantile.  Next, we  explain the necessity to modify the variantile for measuring extreme risks. 

Many empirical studies have shown that asset returns in finance and large
losses in insurance exhibit power-like tails, which are also called heavy
tails; see, for example, \cite{loretan1994testing}, \cite{gabaix2003theory},
\cite{rachev2005fat} and \cite{gabaix2009power}. Moreover, regulators such as
Basel III have set the confidence level of VaR very close to $1$. 
Thus, we are interested in the behavior of a risk measure for heavy-tailed
risks when the confidence level $\tau$ is close to $1$. {Here, we present an
example to show that the direct application of variantile to extreme risks may  not  behave as other quantile based risk measures.} Consider that a risk follows a Pareto($\alpha,\theta$) distribution or a
Student's $t_{\alpha}$-distribution, where $\alpha$ represents the heavy tailedness of the distribution. Figure \ref{f1} shows that as
$\tau \rightarrow1$, the variantile  is converging to $0$. In
general, we can expect that the variantile for heavy-tailed risks is close to
$0$ when the confidence level $\tau$ is close to $1$. Mathematically, this can
be explained as the term $\tau{E}\left[  (X-e_{\tau})_{+}^{2}\right]  $ in
(\ref{var_tau}) vanishes due to $e_{\tau}$ approaching $\infty$ while
$(1-\tau){E}\left[  (X-e_{\tau})_{-}^{2}\right]  $ also vanishes because $1-\tau \rightarrow0$. This shows that the variantile is not a proper
measure for large losses because it fails to reflect the potentially large risk when
the confidence level is close to $1$. 
		\begin{figure}[]
		\centering
		\subfigure[Pareto($\alpha,\theta$) distribution]{
			\includegraphics[width=0.45\linewidth]{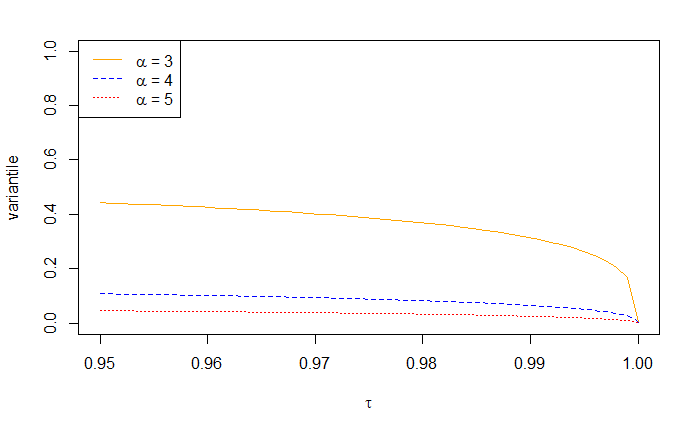}
			\label{s1}
		}
		\subfigure[Student's $t_\alpha$-distribution]{
		\includegraphics[width=0.45\linewidth]{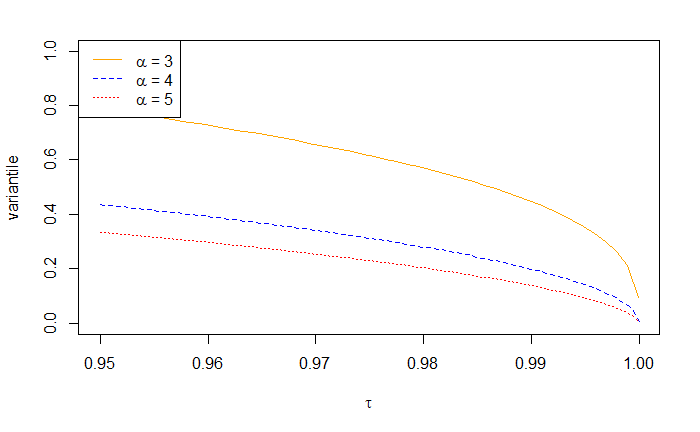}
			\label{s2}
		}
		\caption{The value of the variantile defined in \eqref{var_tau} with $X$ following a Pareto($\alpha,\theta$) or Student's $t_\alpha$-distribution.}
		\label{f1}
	\end{figure}
Hence, we propose the following adjustment to the variantile:%
\begin{equation}
dev_{\tau}(X):=\left(  \frac{\tau}{1-\tau}{E}\left[  (X-e_{\tau})_{+}%
^{2}\right]  +{E}\left[  (X-e_{\tau})_{-}^{2}\right]  \right)  ^{1/2}. \label{dev}
\end{equation}
The resulting risk measure is called the \emph{adjusted standard-deviatile},
or \emph{deviatile} for short. 
The adjustments made in the variantile are that the term $\tau{E}\left[
(X-e_{\tau})_{+}^{2}\right]  $ is scaled with $1-\tau$ and ${E}\left[
(X-e_{\tau})_{-}^{2}\right]  $ is not scaled. By doing so, neither term
vanishes as $\tau \rightarrow1$. Furthermore, we take the square root to ensure
that the resulting risk measure is comparable in scale with the risk itself. From the definition of${\ var}_{\tau}(X)$, it
is clear that
\[
dev_{\tau}(X)=\sqrt{\frac{{var}_{\tau}(X)}{1-\tau}}.%
\]
Thus, the square of the deviatile ${dev}_{\tau}^{2}(X)$ is still a Bayes risk of the
expectile with loss function $L_{\tau}^{\ast}(x)=\frac{\tau}{1-\tau}x_{+}^{2}+x_{-}^{2}$, $x\in \R$. This loss function under the above adjustments is also consistent with that of
the pair $(\mathrm{ES}_{\tau},\mathrm{VaR}_{\tau})$; see e.g. Example 2 of \cite{rockafellar2013fundamental}.

In this paper, we investigate the deviatile for extreme risks by deriving its
asymptotic expansions and exploring  efficient estimation methods. That is,
we are interested in the behavior and estimations of the deviatile for heavy-tailed risks when
$\tau$ is close to $1$. Asymptotic expansions provide an intuitive way to understand how the risk
measure behaves for extreme risks by expressing 
the risk measure as a determined function of VaR
when the confidence level is close to 1.  
In this paper, as a
first step, we derive the first- and second-order asymptotic expansions for
the deviatile.\footnote{Noting the relationship between variantile and deviatile, all the results in this paper also serve as  asymptotic expansions of variantile.}

Inspired by \cite{daouia2018estimation} and \cite{zhao2021estimation}, the first-order asymptotic expansion of the deviatile can also be used to estimate
the deviatile at high confidence levels. This estimation method has been
shown to be easy to implement and exhibit good performance. In this paper, we
propose two estimators for the deviatile. The first is the
intermediate-level estimator based on the first-order asymptotic expansion, which is
useful when $\tau$ is close to $1$ but is still within the data range. The second
is the extreme-level estimator obtained by extrapolating the intermediate-level
estimator to a higher confidence level, which can even be outside the data
range. {We first investigate the asymptotic normality of these two estimators when the sample is independent and identically distributed (i.i.d.)  using the techniques from extreme value theory. Then we relax the condition to allow the sample to have the serial dependence. This is because, in
practice, the data may not be completely independent. For example the asset
returns have been shown to be sequentially dependent, which often is modeled
by GARCH-processes; see, e.g. \cite{mcneil2000estimation} and \cite{chavez2014extreme}. The class of $\beta$-mixing time series
includes many important models used in practice, such as ARMA, ARCH and
GARCH models. Again with the help of extreme value theory, we prove the asymptotic normality of both estimators when the sample is a $\beta$-mixing series.} Simulations are
provided to examine the performance of the two estimators. Moreover, real data  analysis is carried out to compare the deviatile with VaR, ES, and expectile. 

The remainder of the paper is organized as follows. Section \ref{Prlm} reviews heavy-tailed distributions and the asymptotic expansions of
the expectile. In Section \ref{AE}, we derive the asymptotic expansions of the deviatile. In Section \ref{estimation}, estimation methods based on the expansions of the
deviatile are studied. Section \ref{simulation} presents the simulation study, and Section
\ref{real} conducts the real data study of the deviatile. Lastly, Section \ref{conclusion}
concludes the paper.


	\section{Preliminaries on RV and 2RV}\label{Prlm}
	
		Throughout the paper,  $f(x)\sim g(x)$ as $x\to x_0$ is used to represent that
$\lim_{x\to x_0} f(x)/g(x)=1$, and $f(x)=o(1)$ as $x\to x_0$ represents that $\lim_{x\rightarrow
	x_0}f(x)= 0$.
	We first present the definitions of regular variation and the
        second-order regular variation, which are standard assumptions in modeling heavy-tailed risks in extreme value theory. In particular, the second-order regular variation is essential in establishing the convergence rate of the estimation; see \cite{de2007extreme}.
	\begin{definition}
		A Lebesgue measurable function $f:\mathbb{R}\rightarrow \mathbb{R}$ that is
eventually positive is said to be regularly varying (RV) at $\infty$ with
index $\gamma \in \mathbb{R}$ such that
\[
\lim_{t\rightarrow \infty}\frac{f(tx)}{f(t)}=x^{\gamma},\quad x>0.
\]
This is denoted by $f(\cdot)\in \mathrm{RV}_{\gamma}$, and $\gamma$ is called
the tail index. When $\gamma=0$, $f$ is called a slowly varying function.
	
	\end{definition}
	
A risk $X$ with a distribution function $F=1-\overline{F}$ is said to be a
RV random variable/risk if $\overline{F}\in \mathrm{RV}%
_{-\alpha}$ for some $\alpha>0$.
	
	\begin{definition}
		A Lebesgue measurable function $f: \mathbb{R} \rightarrow \mathbb{R}$ that is eventually positive is said to
		be  second-order regularly varying with the first-order parameter $\gamma \in \mathbb{R}$ and the second-order
		parameter $\rho \leqslant 0$, denoted by $f \in 2 \mathrm{RV}_{\gamma, \rho}$, if there exists some ultimately positive or negative
		function $A(t)$ with $A(t) \rightarrow 0$ as $t \rightarrow \infty$ such that
		$$
		\lim _{t \rightarrow \infty} \frac{f(t x) / f(t)-x^{\gamma}}{A(t)}=x^{\gamma} \int_{1}^{x} u^{\rho-1} d u, \quad x>0,
		$$
		where $\int_{1}^{x} u^{\rho-1} d u$ reads as $\log x$ when $\rho=0$. Here, $A(t)$ is referred to as an auxiliary function of $f$.
	\end{definition}

	The tail quantile function $U$ of $X$ is the left-continuous inverse of
$1/\overline{F}$; that is, for $t>0$,%
\[
U(t)=\left(  \frac{1}{\overline{F}}\right)  ^{\leftarrow}(t)=F^{\leftarrow
}\left(  1-\frac{1}{t}\right)  .
\]
It is easy to check that $\overline{F}\in \mathrm{RV}_{-1/\gamma}$ for
$\gamma>0$ if and only if $U\in \mathrm{RV}_{\gamma}$. Furthermore, Theorem 2.3.9
of \cite{de2007extreme} states an equivalence in the second-order condition
between $\overline{F}(\cdot)$ and $U(\cdot)$ that for $\gamma>0$ and
$\rho \leqslant0,\overline{F}\in2\mathrm{RV}_{-1/\gamma,\rho/\gamma}$ with an
auxiliary function $\gamma^{-2} A(1/\overline{F}(\cdot))$ if and only if
$U(\cdot)\in2\mathrm{RV}_{\gamma,\rho}$ with an auxiliary function $A(\cdot)$.

	\section{Asymptotic Expansions of the Deviatile} \label{AE}
	Based on the asymptotic expansions of the expectile and the
        quantile, in this section, we derive the first- and
        second-order asymptotic expansions for the  deviatile. All the
        proofs are relegated to the Appendix.

\begin{theorem}
\label{1st}Assume that $U(t)\in \mathrm{RV}_{\gamma}$ with $0<\gamma<1/2$. Then as $\tau \uparrow1$,
\[
{dev}_{\tau}(X)\sim \beta_{\gamma}q_{\tau},
\]
where $\beta_{\gamma}=\frac{(\gamma^{-1}-1)^{-\gamma}}{\sqrt{1-2\gamma}}$.

\end{theorem}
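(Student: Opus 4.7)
The plan is to split $\mathrm{dev}_\tau^2(X)=\frac{\tau}{1-\tau}\E[(X-e_\tau)_+^2]+\E[(X-e_\tau)_-^2]$ and compute the asymptotic behavior of the two pieces separately. The key inputs from the expectile literature (e.g.\ \citealp{bellini2014generalized}, \citealp{mao2015asymptotic}) are the two equivalent first-order relations
\[
\frac{e_\tau}{q_\tau}\longrightarrow (\gamma^{-1}-1)^{-\gamma} \quad \text{and} \quad \frac{\overline F(e_\tau)}{1-\tau}\longrightarrow \gamma^{-1}-1 \quad \text{as } \tau\uparrow 1,
\]
both of which follow from the expectile's first-order condition $\tau\E[(X-e_\tau)_+]=(1-\tau)\E[(X-e_\tau)_-]$ combined with Karamata's theorem (using $\E[(X-e_\tau)_-]\sim e_\tau$, which is valid since $\gamma<1/2<1$).

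For the positive piece, I would use integration by parts to write $\E[(X-e_\tau)_+^2]=2\int_{e_\tau}^\infty (x-e_\tau)\overline F(x)\,dx$, decompose this into $\int_{e_\tau}^\infty x\overline F(x)\,dx - e_\tau\int_{e_\tau}^\infty \overline F(x)\,dx$, and apply Karamata's theorem to $\overline F\in\mathrm{RV}_{-1/\gamma}$. The hypothesis $\gamma<1/2$ is precisely what makes both applications of Karamata admissible and keeps the integral finite. Using the cancellation
\[
\frac{1}{1/\gamma-2}-\frac{1}{1/\gamma-1}=\frac{\gamma^2}{(1-\gamma)(1-2\gamma)},
\]
one obtains $\E[(X-e_\tau)_+^2] \sim \frac{2\gamma^2\, e_\tau^2\, \overline F(e_\tau)}{(1-\gamma)(1-2\gamma)}$. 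Substituting the expectile asymptotic for $\overline F(e_\tau)/(1-\tau)$ gives
\[
\frac{\tau}{1-\tau}\E[(X-e_\tau)_+^2] \sim \frac{2\gamma}{1-2\gamma}\, e_\tau^2.
\]

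For the negative piece, since $\gamma<1/2$ forces $\E[X^2]<\infty$, I would expand
\[
\E[(X-e_\tau)_-^2]= e_\tau^2\, P(X\le e_\tau)-2 e_\tau\, \E[X\ii_{\{X\le e_\tau\}}]+\E[X^2\ii_{\{X\le e_\tau\}}],
\]
and invoke dominated convergence to conclude $\E[(X-e_\tau)_-^2]\sim e_\tau^2$. Summing the two contributions yields $\mathrm{dev}_\tau^2(X)\sim \left(\frac{2\gamma}{1-2\gamma}+1\right)e_\tau^2 = \frac{e_\tau^2}{1-2\gamma}$; taking square roots and applying $e_\tau\sim (\gamma^{-1}-1)^{-\gamma} q_\tau$ delivers the conclusion with $\beta_\gamma=(\gamma^{-1}-1)^{-\gamma}/\sqrt{1-2\gamma}$.

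The main technical hurdle is the two-fold Karamata computation on $\E[(X-e_\tau)_+^2]$ and carefully tracking the cancellation that produces $\gamma^2/[(1-\gamma)(1-2\gamma)]$. The other steps reduce to citing previously established expectile asymptotics and elementary convergence arguments, so the heart of the proof is the precise regular-variation bookkeeping in the positive tail.
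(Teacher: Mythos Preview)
Your proof is correct and follows essentially the same approach as the paper's: both split $\mathrm{dev}_\tau^2$ into a positive-part term and a remainder, handle the positive part by integration by parts followed by a regular-variation computation yielding the constant $\frac{2\gamma^2}{(1-\gamma)(1-2\gamma)}$, observe that the remainder is $\sim e_\tau^2$ via $E[X^2]<\infty$, and then invoke the known expectile asymptotics $e_\tau\sim(\gamma^{-1}-1)^{-\gamma}q_\tau$ and $\overline F(e_\tau)/(1-\tau)\to\gamma^{-1}-1$. The only cosmetic differences are that the paper uses the rewriting $\frac{2\tau-1}{1-\tau}E[(X-e_\tau)_+^2]+E[(X-e_\tau)^2]$ and evaluates the tail integral via the substitution $x\mapsto e_\tau x$ together with Potter's bounds and dominated convergence, whereas you keep the original decomposition and apply Karamata's theorem to the two pieces of the split integral; both routes are standard and equivalent.
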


In the proof of Theorem \ref{1st}, the relation (\ref{1stavar}) shows that ${dev}_{\tau
}^2(X)$  is asymptotically linearly
proportional to the square of $q_{\tau}$ as 	$\tau \to 1$ while other risk
measures such as the expectile are linearly proportional to
$q_{\tau}$ (Proportion \ref{p1}). {This again verifies the necessity of
introducing the adjusted
version of the variantile so that the deviatile is linearly proportional to $q_{\tau}$, which makes it comparable to other quantile based risk measures.}
	
{
 A by-product from the proof of Theorem \ref{1st} is that we can expand the
deviatile as a linear function of the expectile as follows. 

\begin{corollary}
\label{cor1}Under the conditions of Theorem \ref{1st}, as $\tau \rightarrow1$,
we have%
\[
dev_{\tau}(X)\sim(1-2\gamma)^{-1/2}e_{\tau}.
\]

\end{corollary}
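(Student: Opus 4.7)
The plan is to derive Corollary \ref{cor1} as an immediate consequence of Theorem \ref{1st} combined with the standard first-order expansion of the expectile for heavy-tailed risks. Theorem \ref{1st} gives $dev_{\tau}(X) \sim \beta_{\gamma} q_{\tau}$ with $\beta_{\gamma} = (\gamma^{-1}-1)^{-\gamma}/\sqrt{1-2\gamma}$, while the expectile--quantile asymptotic recorded as Proposition \ref{p1} gives $e_{\tau} \sim (\gamma^{-1}-1)^{-\gamma}\, q_{\tau}$ as $\tau \uparrow 1$. Forming the ratio and cancelling the common factor $(\gamma^{-1}-1)^{-\gamma} q_{\tau}$ from numerator and denominator yields
$$\frac{dev_{\tau}(X)}{e_{\tau}} \longrightarrow \frac{1}{\sqrt{1-2\gamma}}, \qquad \tau \uparrow 1,$$
which is exactly the claim. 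To make this division legitimate, I would note that $e_{\tau} \to \infty$ as $\tau \uparrow 1$: since $U \in \mathrm{RV}_{\gamma}$ with $\gamma > 0$, the distribution $F$ has infinite upper endpoint, so $q_{\tau}$ and hence $e_{\tau}$ are eventually positive and blow up, making the ratio well defined for $\tau$ close to $1$.

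An equivalent, slightly more direct route is suggested by the author's own remark that Corollary \ref{cor1} is ``a by-product from the proof of Theorem \ref{1st}.'' The intermediate identity labelled (1stavar) in that proof writes $dev_{\tau}^{2}(X)$ explicitly in terms of $e_{\tau}^{2}$ together with moments of $(X-e_{\tau})_{\pm}$, and the asymptotic analysis of those moments under regular variation yields $dev_{\tau}^{2}(X) \sim (1-2\gamma)^{-1} e_{\tau}^{2}$ \emph{before} the step that converts $e_{\tau}$ back to $q_{\tau}$. Taking positive square roots at that intermediate stage produces Corollary \ref{cor1} without invoking Proposition \ref{p1} at all.

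Either route reduces the corollary to an algebraic step after Theorem \ref{1st} is in hand, so there is no genuine technical obstacle. The only piece of bookkeeping is the consistency check that $\beta_{\gamma}/(\gamma^{-1}-1)^{-\gamma} = (1-2\gamma)^{-1/2}$, which is immediate from the formula for $\beta_{\gamma}$ given in Theorem \ref{1st}.
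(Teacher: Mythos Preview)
Your proposal is correct and matches the paper's intent: the paper gives no explicit proof of Corollary~\ref{cor1} beyond the remark that it is a by-product of the proof of Theorem~\ref{1st}, and your first route (combine $dev_\tau(X)\sim\beta_\gamma q_\tau$ from Theorem~\ref{1st} with $e_\tau\sim(\gamma^{-1}-1)^{-\gamma}q_\tau$ and cancel) is exactly the intended argument. One minor inaccuracy: relation~(\ref{1stavar}) in the paper is already written in terms of $q_\tau^2$, not $e_\tau^2$; the $e_\tau$-version you describe is indeed present in the proof but occurs a few lines \emph{before} (\ref{1stavar}), where the proof shows $E[(X-e_\tau)_+^2]\sim\frac{2\gamma^2}{(1-2\gamma)(1-\gamma)}e_\tau^2\overline F(e_\tau)$ and $E[(X-e_\tau)^2]\sim e_\tau^2$, and that intermediate step still uses the first-order relation $\overline F(e_\tau)/(1-\tau)\to\gamma^{-1}-1$ from Proposition~\ref{p1}, so neither route is entirely free of it.
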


Combining Theorem \ref{1st}, Corollary \ref{cor1} and Proposition \ref{p1} in
Appendix, we obtain the following rankings of the deviatile, quantile and
expectile of a heavy tailed $X$ as $\tau \rightarrow1$,
\[
e_{\tau}\lesssim q_{\tau}(X),~~e_{\tau}\lesssim dev_{\tau}(X)~~
\text{for all }0<\gamma<1/2,
\]
and
\[
q_{\tau}(X)\lesssim dev_{\tau}(X)~ {\rm for}~\gamma^{\ast}\leq \gamma<1/2,~~~~
q_{\tau}(X)\gtrsim dev_{\tau}(X)~ {\rm for}~ 0<\gamma \leq \gamma^{\ast},
\]
where  
$f(x)\lesssim(\gtrsim)g(x)$ as $x\rightarrow x_{0}$ means that $\lim_{x\rightarrow
x_{0}}f(x)/g(x)\leq(\geq)1$. Here $\gamma^{\ast}$ is the unique solution in
$(0,1/2)$ of the equation
\[
(\gamma^{-1}-1)^{-\gamma}=\sqrt{1-2\gamma}.
\]
The numerical value of $\gamma^{\ast}$ is approximately $0.2135$. The
existence of $\gamma^{\ast}$ follows from the standard algebraic manipulations
such as finding derivative and proving the monotonicity of function $(\gamma^{-1}-1)^{-\gamma}-\sqrt{1-2\gamma}$. Such comparisons illustrate
that at the same confidence level, the value of deviatile is the largest among
the expectile and quantile when the risk has a heavier tail (with the tail
index greater than $\gamma^{\ast}$).%
}

Next, we present the second-order asymptotic expansions of the deviatile.

\begin{theorem}
\label{2ndexp}Assume that $U\in2\mathrm{RV}_{\gamma,\rho}$ with $0<\gamma
<1/2$, $\rho \leq0$ and auxiliary function $A(t)$. Then as $\tau \uparrow1$,
\[
dev_{\tau}(X)=\beta_{\gamma}q_{\tau}\left(  1+w(\tau)\right)  ,
\]
where
\[
w(\tau)=\xi_{1}q_{\tau}^{-1}(1+o(1))+\xi_{2}A\left(  (1-\tau)^{-1}\right)(1+o(1)),
\]
\[
\xi_{1}=(2\gamma-1)\left(  \gamma^{-1}-1\right)  ^{\gamma}{E}(X)
~~{\rm and}~~
\xi_{2}=\frac{\left(  \gamma^{-1}-1\right)  ^{-\rho}\left(  2-3\gamma
-\rho \right)  }{\left(  1-\rho-2\gamma \right)  \left(  1-\rho-\gamma \right)
}+\frac{\left(  \gamma^{-1}-1\right)  ^{-\rho}-1}{\rho}.
\]

\end{theorem}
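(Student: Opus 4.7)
My plan is to expand the identity
\[
dev_\tau^2(X)=\frac{\tau}{1-\tau}\,E[(X-e_\tau)_+^2]+E[(X-e_\tau)_-^2]
\]
to second order and then take the square root. The first step is the decomposition $E[(X-e_\tau)_-^2]=E[(X-e_\tau)^2]-E[(X-e_\tau)_+^2]=e_\tau^2-2e_\tau E[X]+E[X^2]-E[(X-e_\tau)_+^2]$, which gives
\[
dev_\tau^2(X)=\frac{2\tau-1}{1-\tau}\,E[(X-e_\tau)_+^2]+e_\tau^2-2e_\tau E[X]+E[X^2].
\]
This isolates the heavy-tail content in a single integral, and the $-2e_\tau E[X]$ summand will be responsible for the $\xi_1$-correction of order $q_\tau^{-1}$.

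I would then expand the tail integral by writing $E[(X-e_\tau)_+^2]=2\int_{e_\tau}^{\infty}(x-e_\tau)\overline{F}(x)\,dx$ and substituting $x=e_\tau t$. Since $\overline{F}\in 2\mathrm{RV}_{-1/\gamma,\rho/\gamma}$ with auxiliary function $\widetilde{A}_F(t):=\gamma^{-2}A(1/\overline{F}(t))$, a Potter-type bound (which provides an integrable dominant on $[1,\infty)$ because $1/\gamma>2$) lets me insert the pointwise 2RV approximation $\overline{F}(e_\tau t)/\overline{F}(e_\tau)=t^{-1/\gamma}+\widetilde{A}_F(e_\tau)\,t^{-1/\gamma}(t^{\rho/\gamma}-1)/(\rho/\gamma)+o(\widetilde{A}_F(e_\tau))$ inside the integral. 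The two resulting integrals are elementary: $\int_1^{\infty}(t-1)t^{-a}dt=1/((a-1)(a-2))$ for $a=1/\gamma$ and $a=(1-\rho)/\gamma$. Combining them via the identity $(1-\gamma)(1-2\gamma)-(1-\rho-\gamma)(1-\rho-2\gamma)=\rho(2-3\gamma-\rho)$ gives
\[
E[(X-e_\tau)_+^2]=\frac{2\gamma^2 e_\tau^2\overline{F}(e_\tau)}{(1-\gamma)(1-2\gamma)}\Bigl[1+\frac{\gamma(2-3\gamma-\rho)}{(1-\rho-\gamma)(1-\rho-2\gamma)}\widetilde{A}_F(e_\tau)(1+o(1))\Bigr].
\]

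Next I would re-express $\overline{F}(e_\tau)$ and $\widetilde{A}_F(e_\tau)$ in terms of $1-\tau$. Writing $e_\tau=cq_\tau(1+\delta(\tau))$ with $c=(\gamma^{-1}-1)^{-\gamma}$ and $\delta(\tau)=o(1)$ from the second-order expectile expansion (Proposition~\ref{p1}), the 2RV property applied at $q_\tau$ yields
\[
\overline{F}(e_\tau)=c^{-1/\gamma}(1-\tau)\Bigl[1-\frac{\delta(\tau)}{\gamma}+\frac{(\gamma^{-1}-1)^{-\rho}-1}{\gamma\rho}\,A((1-\tau)^{-1})(1+o(1))\Bigr],
\]
and $\widetilde{A}_F(e_\tau)\sim\gamma^{-2}(\gamma^{-1}-1)^{-\rho}A((1-\tau)^{-1})$ because $\widetilde{A}_F\in\mathrm{RV}_{\rho/\gamma}$ and $e_\tau/q_\tau\to c$. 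Plugging into the formula for $dev_\tau^2(X)$ and dividing by $(\beta_\gamma q_\tau)^2=c^2q_\tau^2/(1-2\gamma)$, a pleasing cancellation occurs: the $\delta(\tau)$-contribution coming from $e_\tau^2\overline{F}(e_\tau)$ in the tail integral equals $-2(1-2\gamma)\delta(\tau)$, and is exactly offset by $+2(1-2\gamma)\delta(\tau)$ coming from the bare $e_\tau^2$ term, so the second-order expansion of $e_\tau$ drops out of the final answer. What remains is
\[
\frac{dev_\tau^2(X)}{(\beta_\gamma q_\tau)^2}=1+\frac{2(2\gamma-1)E[X]}{cq_\tau}(1+o(1))+2\xi_2\,A((1-\tau)^{-1})(1+o(1)),
\]
and the Taylor expansion $\sqrt{1+x}=1+x/2+O(x^2)$ produces the stated formula, with $\xi_1=(2\gamma-1)(\gamma^{-1}-1)^{\gamma}E[X]$ (using $1/c=(\gamma^{-1}-1)^{\gamma}$).

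The main obstacle will be the algebraic bookkeeping in the assembly step: three separate sources of $A((1-\tau)^{-1})$ corrections (from the 2RV inside the tail integral, from $\overline{F}(e_\tau)/\overline{F}(q_\tau)$, and from the $A$-term hidden inside $\delta(\tau)$) must be added with consistent normalizations, and the compact form of $\xi_2$ only emerges after the $\delta(\tau)$ cancellation together with the non-obvious polynomial identity above. A secondary technical point is verifying that Potter's bound is uniform enough to interchange limit and integration on $[1,\infty)$, which rests precisely on $0<\gamma<1/2$.
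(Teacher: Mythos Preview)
Your proposal is correct and follows essentially the same route as the paper: the decomposition $dev_\tau^2(X)=\frac{2\tau-1}{1-\tau}E[(X-e_\tau)_+^2]+E[(X-e_\tau)^2]$, the integration-by-parts formula $E[(X-e_\tau)_+^2]=2e_\tau^2\int_1^\infty \overline{F}(e_\tau x)(x-1)\,dx$, the 2RV Drees-type bound with dominated convergence (valid since $\gamma<1/2\le(1-\rho)/2$), and the final square root are all identical to the paper's proof. The only cosmetic difference is that the paper expresses $\overline{F}(e_\tau)$ and $e_\tau$ through Proposition~\ref{p1}'s pre-packaged remainders $\eta(\tau)$ and $r(\tau)$ rather than re-expanding $\overline{F}(e_\tau)$ via 2RV at $q_\tau$; your explicit $\delta(\tau)$-cancellation is equivalent to, and implicit in, the paper's combination $2\gamma\eta(\tau)+2r(\tau)$.
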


\bigskip
Next, we provide some examples to demonstrate the accuracy of the first-
and second-order expansions of the deviatile given in Theorems
\ref{1st} and \ref{2ndexp}.

A Pareto distribution with parameters $\alpha>0$ and $\theta>0 $, denoted by Pareto($\alpha,\theta$), has a distribution function of
	$$
	F(x)=1-\left(\frac{\theta}{x+\theta}\right)^{\alpha}, \quad x>0.
	$$
It is easy to verify that the tail quantile function of the Pareto distribution satisfies that
$U(t)\in2\mathrm{RV}_{\gamma,-\gamma}$ with auxiliary function $A(t)=\gamma
t^{-\gamma}$, $\gamma=1/\alpha$ and $\rho=-1/\alpha$.
{In Figure \ref{fig1} (a, c), we plot the true values of the deviatile for a
Pareto($\alpha,1$) distribution when $\alpha=2.2$ and $3$ and its first- and
second-order expansions for $\tau$ ranging from $0.95$ to $0.999$. In Figure
\ref{fig1} (b, d), we plot the relative error of each expansion.  The true values of the expectile for the Pareto distribution are calculated numerically with R and  then these values are plugged in to \eqref{dev} to obtain the true values of the deviatile.} From these plots, we can see that the second-order asymptotic expansion improves the first-order asymptotic expansion, especially in the
heavier-tailed case. 
	
		\begin{figure}[]
		\centering
		\subfigure[$\alpha=2.2$]{
			\includegraphics[width=0.45\linewidth]{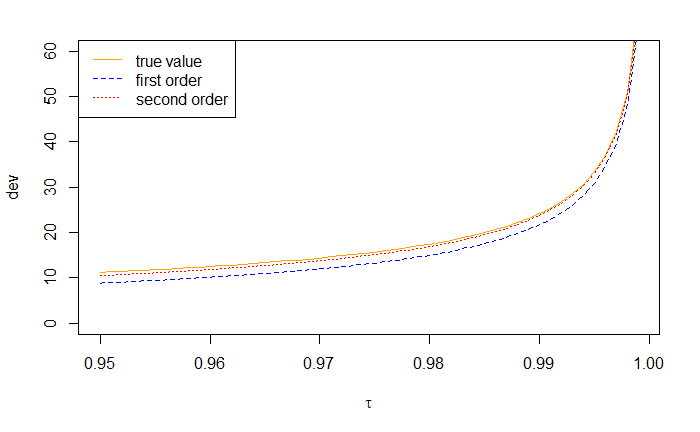}
			\label{sub1}
		}
		\subfigure[$\alpha=2.2$, relative error]{
		\includegraphics[width=0.45\linewidth]{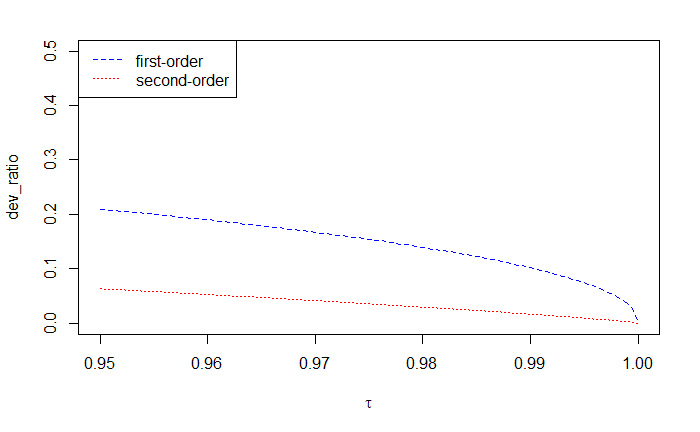}
			\label{sub2}
		}
		\quad    
		\subfigure[$\alpha=3$]{
		\includegraphics[width=0.45\linewidth]{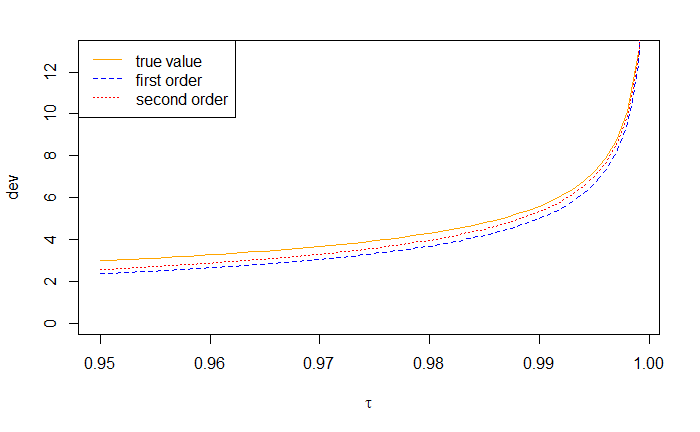}
			\label{sub3}
		}
		\subfigure[$\alpha=3$, relative error]{
			\includegraphics[width=0.45\linewidth]{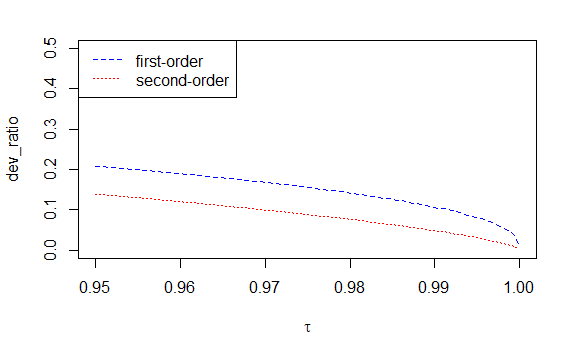}
			\label{sub4}
		}
		\caption{Asymptotic expansions of the deviatile for the Pareto($\alpha,1$) distribution }
		\label{fig1}
	\end{figure}

The Student's $t_{\alpha}$-distribution has a density function of
\begin{equation}
 f(x)=\frac{\Gamma((\alpha+1)/2)}{\sqrt{\alpha \pi}\Gamma(\alpha/2)}\left(
1+\frac{x^{2}}{\alpha}\right)  ^{-(\alpha+1)/2},\qquad x\in \mathbb{R}.   \label{st}
\end{equation}
One can verify that $\overline{F}(t)\in2\mathrm{RV}_{-\alpha,-2},$ and
$U(t)\in2\mathrm{RV}_{\gamma,-2\gamma},$ with $\gamma=1/\alpha,\rho=-2/\alpha$,
\[
A(t)\sim \frac{\alpha+1}{\alpha+2}\left(  c_{\alpha}t\right)  ^{-2/\alpha
}\quad{\rm and}\quad c_{\alpha}=\frac{2\Gamma((\alpha+1)/2)\alpha^{(\alpha-1)/2}}%
{\sqrt{\alpha \pi}\Gamma(\alpha/2)}.
\]
{In Figure \ref{fig2} (a, c), we plot the true values and the first- and second-order
expansions of the deviatile for a Student's $t_{\alpha}$-distribution
when $\alpha=2.2$ and $3$.  In Figure \ref{fig2} (b, d), we plot the relative error of the each expansion. The true values of the expectile for the Student's $t_{\alpha}$-distribution are calculated numerically with R and the true values of deviatile are calculated accordingly.} Both first- and second-order expansions are better
approximations of the deviatile in the heavier-tailed case. 
	
		\begin{figure}[]
		\centering
		\subfigure[$\alpha=2.2$]{
			\includegraphics[width=0.45\linewidth]{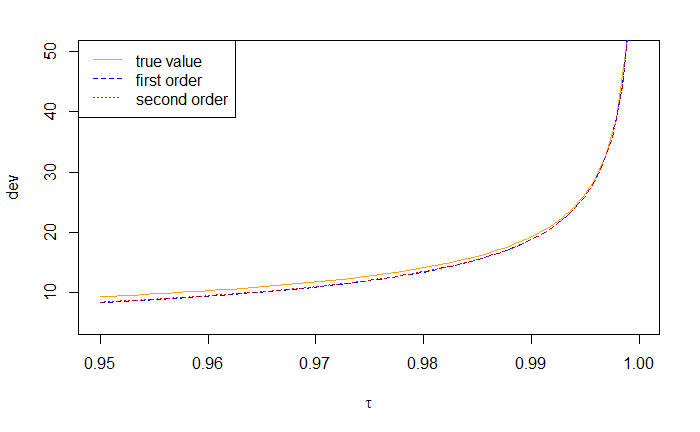}
			\label{sub5}
		}
	\subfigure[$\alpha=2.2$, relative error]{
		\includegraphics[width=0.45\linewidth]{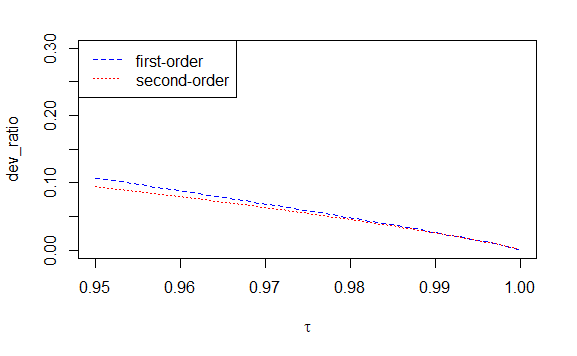}
			\label{sub6}
		}
		\subfigure[$\alpha=3$]{
		\includegraphics[width=0.45\linewidth]{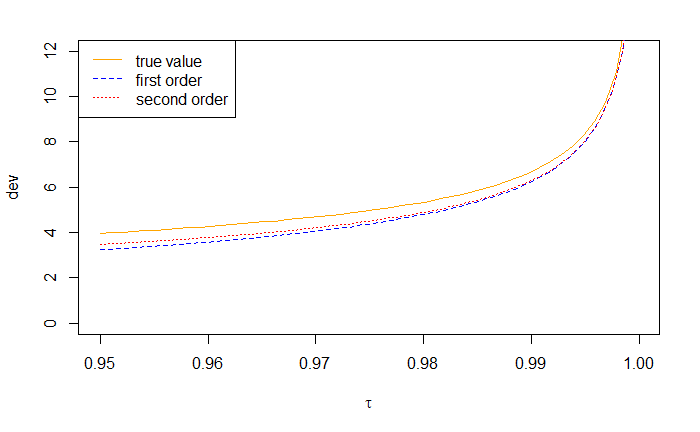}
			\label{sub7}
		}
		\subfigure[$\alpha=3$, relative error]{
			\includegraphics[width=0.45\linewidth]{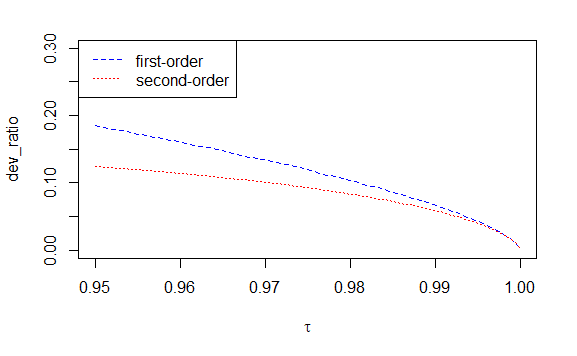}
			\label{sub8}
		}
		\caption{Asymptotic expansions of the deviatile for
                  the Student's $t_\alpha$-distribution }
		\label{fig2}
	\end{figure}

\section{Estimation} \label{estimation}

In this section, we propose estimators of the deviatile at the intermediate
level $\tau_{n}$, where $\tau_{n}\rightarrow1$ and  $n\left(  1-\tau
_{n}\right)  \rightarrow \infty$ as $n\rightarrow \infty$, and the extreme
levels $p_{n}$, where $p_{n}\rightarrow1$ and $n\left(  1-p_{n}\right)
\rightarrow c$ for some $c>0$. The asymptotic normality of both
estimators are investigated when the sample is i.i.d. and is a $\beta$-mixing series, respectively. All the proofs are relegated to the Appendix.

\subsection{Independence}
Suppose that $X,X_{1},...,X_{n}$ are i.i.d. random
variables with distribution function $F$. The order statistics are denoted by %
$
X_{1,n}\leq X_{2,n}\leq \cdots \leq X_{n,n}
$.

\subsubsection{Intermediate level}

Inspired by the asymptotic expansions in Theorem \ref{1st}, we propose the
intermediate-level estimator of the deviatile as
\[
\widehat{dev}_{\tau_{n}}=\widehat{\beta}_{\gamma}\widehat{q}_{\tau_{n}},
\]
where %
\begin{equation}
 \widehat{\beta}_{\gamma}=\frac{(\widehat{\gamma}^{-1}-1)^{-\widehat{\gamma}}%
}{\sqrt{1-2\widehat{\gamma}}},
\label{beta}   
\end{equation}
$\widehat{q}_{\tau_{n}}$ 
is the estimator of the quantile $q_{\tau_{n}}$ given by 
\[
\widehat{q}_{\tau_{n}}=X_{n-\left \lfloor n\left(  1-\tau_{n}\right)
\right \rfloor ,n},%
\] and $\left \lfloor
\cdot \right \rfloor $ denotes the floor function. In \eqref{beta}, $\widehat{\gamma}$ is an estimator of the tail index $\gamma$, and the
common choices are the Hill estimator \citep{hill1975simple}, the moment estimator \citep{dekkers1989moment} and
the maximum likelihood estimator \citep{drees2004maximum}. This estimation method is
motivated by \cite{daouia2018estimation}.

Next, we explore the asymptotic normality for $\widehat{dev}_{\tau_{n}}$.

\begin{theorem}
\label{int}Assume that $U\in2\mathrm{RV}_{\gamma,\rho}$ with $0<\gamma
<1/2$, $\rho \leq0$ and auxiliary function $A(t)$. Further assume that
$\tau_{n}\rightarrow1$, $n\left(  1-\tau_{n}\right)  \rightarrow \infty$ as
$n\rightarrow \infty$, and
\begin{equation}
\sqrt{n\left(  1-\tau_{n}\right)  }\left(  \widehat{\gamma}-\gamma
,\frac{\widehat{q}_{\tau_{n}}}{q_{\tau_{n}}}-1\right)  \overset{d}%
{\longrightarrow}(\Gamma,\Theta).\label{limit}%
\end{equation}
If $\sqrt{n\left(  1-\tau_{n}\right)  }q_{\tau_{n}}^{-1}\rightarrow \lambda
_{1}\in \mathbb{R}$ and $\sqrt{n\left(  1-\tau_{n}\right)  }A\left(  \left(
1-\tau_{n}\right)  ^{-1}\right)  \rightarrow \lambda_{2}\in \mathbb{R}$, then we
have that as $n\rightarrow \infty$,
\[
\sqrt{n\left(  1-\tau_{n}\right)  }\left(  \frac{\widehat{dev}_{\tau_{n}}%
}{dev_{\tau_{n}}}-1\right)  \overset{d}{\longrightarrow}m(\gamma)\Gamma
+\Theta-\left(  \lambda_{1}\xi_{1}+\lambda_{2}\xi_{2}\right)  ,
\]
where
\[
m(\gamma)=-\log(\gamma^{-1}-1)+\frac{1}{1-\gamma}+\frac{1}{1-2\gamma}.
\]

\end{theorem}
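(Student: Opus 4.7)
The plan is to decompose the relative error into three pieces using the factorization $\widehat{dev}_{\tau_n} = \widehat{\beta}_\gamma \widehat{q}_{\tau_n}$ and the second-order expansion $dev_{\tau_n} = \beta_\gamma q_{\tau_n}(1+w(\tau_n))$ from Theorem \ref{2ndexp}. Writing
\[
\frac{\widehat{dev}_{\tau_n}}{dev_{\tau_n}} - 1 = \frac{\widehat{\beta}_\gamma/\beta_\gamma \cdot \widehat{q}_{\tau_n}/q_{\tau_n}}{1+w(\tau_n)} - 1,
\]
a first-order Taylor expansion around $1$ (using $w(\tau_n)=o(1)$, $\widehat{\gamma}-\gamma=o_P(1)$, and $\widehat{q}_{\tau_n}/q_{\tau_n}-1=o_P(1)$, all guaranteed by the hypotheses) reduces the problem to controlling the three summands
\[
\Bigl(\tfrac{\widehat{\beta}_\gamma}{\beta_\gamma}-1\Bigr) + \Bigl(\tfrac{\widehat{q}_{\tau_n}}{q_{\tau_n}}-1\Bigr) - w(\tau_n),
\]
with the remainder being $o_P((n(1-\tau_n))^{-1/2})$ by Slutsky and the joint convergence in (\ref{limit}).

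For the first summand, the key calculation is the delta method applied to the smooth map $\gamma \mapsto \beta_\gamma = (\gamma^{-1}-1)^{-\gamma}/\sqrt{1-2\gamma}$ on $(0,1/2)$. I would compute the logarithmic derivative
\[
\frac{d}{d\gamma}\log\beta_\gamma = -\log(\gamma^{-1}-1) + \frac{1}{1-\gamma} + \frac{1}{1-2\gamma} = m(\gamma),
\]
so that $\widehat{\beta}_\gamma/\beta_\gamma - 1 = m(\gamma)(\widehat{\gamma}-\gamma)(1+o_P(1))$, and hence $\sqrt{n(1-\tau_n)}(\widehat{\beta}_\gamma/\beta_\gamma - 1) \to m(\gamma)\Gamma$ jointly with the other components by (\ref{limit}).

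For the second summand, the assumption (\ref{limit}) directly gives $\sqrt{n(1-\tau_n)}(\widehat{q}_{\tau_n}/q_{\tau_n}-1) \to \Theta$. For the third summand, I invoke the explicit form of $w(\tau_n)$ from Theorem \ref{2ndexp}, namely $w(\tau_n) = \xi_1 q_{\tau_n}^{-1}(1+o(1)) + \xi_2 A((1-\tau_n)^{-1})(1+o(1))$, so the deterministic scaling assumptions $\sqrt{n(1-\tau_n)}q_{\tau_n}^{-1}\to\lambda_1$ and $\sqrt{n(1-\tau_n)}A((1-\tau_n)^{-1})\to\lambda_2$ yield $\sqrt{n(1-\tau_n)}w(\tau_n) \to \lambda_1\xi_1 + \lambda_2\xi_2$.

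Combining the three pieces via the continuous mapping and Slutsky theorems produces the stated limit $m(\gamma)\Gamma + \Theta - (\lambda_1\xi_1 + \lambda_2\xi_2)$. The main obstacle I expect is bookkeeping the remainder terms: one must verify that the cross-products and the higher-order Taylor residue from expanding $(1+w(\tau_n))^{-1}$ and from expanding $\beta_\gamma$ at $\widehat{\gamma}$ are genuinely $o_P((n(1-\tau_n))^{-1/2})$, which requires using both $\widehat{\gamma} = \gamma + O_P((n(1-\tau_n))^{-1/2})$ and the smoothness of $\gamma \mapsto \beta_\gamma$ away from $\gamma = 1/2$. The verification of the closed-form $m(\gamma)$ via the logarithmic derivative is routine but must be carried out carefully to match the stated expression.
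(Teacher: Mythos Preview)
Your proposal is correct and follows essentially the same route as the paper: both use the factorization $\widehat{dev}_{\tau_n}=\widehat\beta_\gamma\widehat q_{\tau_n}$ together with the second-order expansion $dev_{\tau_n}=\beta_\gamma q_{\tau_n}(1+w(\tau_n))$, then linearize and apply the delta method to $\gamma\mapsto\beta_\gamma$ to produce the coefficient $m(\gamma)$, handle the quantile ratio via \eqref{limit}, and treat the deterministic $w(\tau_n)$ term using the assumptions on $\lambda_1,\lambda_2$. The only cosmetic difference is that the paper first takes logarithms and splits $\log\beta_\gamma$ into two pieces $I_1,I_2$ before applying the delta method, whereas you linearize the ratio directly and compute $\frac{d}{d\gamma}\log\beta_\gamma=m(\gamma)$ in one step; the content is the same.
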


\bigskip

Consider the Hill estimator for $\gamma$
\[
\widehat{\gamma}_{H}=\frac{1}{k}\sum_{i=0}^{k-1}\log X_{n-i,n}-\log X_{n-k,n}.
\]
By Theorem 3.2.5 in \cite{de2007extreme}, under some conditions, when
$\widehat{\gamma}=\widehat{\gamma}_{H}$ in (\ref{limit}), the limiting
distribution $\Gamma$ satisfies that
\[
\Gamma \overset{d}{=}\mathcal{N}\left(  \frac{\lambda_{2}}{1-\rho},\gamma
^{2}\right)  ,
\]
and\ by Theorem 2.4.1 in \cite{de2007extreme},
\[
\Theta \overset{d}{=}\mathcal{N}\left(  0,\gamma^{2}\right)  .
\]
Furthermore, Lemma 3.2.3 of \cite{de2007extreme} shows that both Gaussian limiting
distributions are independent. 

When $\widehat{\gamma}=\widehat{\gamma}_{H}$, we denote the estimator of the deviatile as
$\widehat{dev}_{\tau_{n}}(H)$. Combining the above results, we obtain the
asymptotic normality of $\widehat{dev}_{\tau_{n}}(H)$ in the following corollary.

\begin{corollary}\label{cor1}
Assume that  $U(t)\in2\mathrm{RV}%
_{\gamma,\rho}$ with $0<\gamma<1/2$ and $\rho \leq0$ with
auxiliary function $A(\cdot)$. Further assume that $\tau_{n}\rightarrow1$,
$n\left(  1-\tau_{n}\right)  \rightarrow \infty$ as $n\rightarrow \infty$. If
$\sqrt{n\left(  1-\tau_{n}\right)  }q_{\tau_{n}}^{-1}\rightarrow \lambda_{1}%
\in \mathbb{R}$ and $\sqrt{n\left(  1-\tau_{n}\right)  }A((1-\tau_{n}%
)^{-1})\rightarrow \lambda_{2}\in \mathbb{R}$, then as
$n\rightarrow \infty$,
\[
\sqrt{n\left(  1-\tau_{n}\right)  }\left(  \frac{\widehat{dev}_{\tau_{n}}%
(H)}{dev_{\tau_{n}}(H)}-1\right)  \overset{d}{\longrightarrow}\mathcal{N}%
\left(  \frac{m(\gamma)}{1-\rho}\lambda_{2}-\left(  \lambda_{1}\xi_{1}%
+\lambda_{2}\xi_{2}\right)  ,v(\gamma)\right)  ,
\]
where $m(\gamma)$ given in Theorem \ref{int} and $v(\gamma)=\gamma
^{2}[1+m(\gamma)^{2}]$.
\end{corollary}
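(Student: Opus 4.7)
The plan is to invoke Theorem \ref{int} as a black box and then specialize the joint limit $(\Gamma,\Theta)$ to the Hill choice $\widehat{\gamma}=\widehat{\gamma}_H$, so the task reduces to computing the mean and variance of a concrete linear combination of independent Gaussians. First, I would verify that the conditions for Theorem \ref{int} are satisfied: the second-order hypothesis on $U$, the growth condition on $\tau_n$, and the scaling conditions involving $\lambda_1$ and $\lambda_2$ are identical to the corollary's assumptions; the only thing that needs to be imported is the joint convergence in (\ref{limit}) with $\widehat{\gamma}=\widehat{\gamma}_H$, and this is the standard consequence of Theorem 3.2.5 of \cite{de2007extreme} (asymptotic normality of the Hill estimator with bias $\lambda_2/(1-\rho)$ and variance $\gamma^2$), Theorem 2.4.1 of \cite{de2007extreme} (asymptotic normality of the intermediate order statistic estimator of $q_{\tau_n}$ with mean $0$ and variance $\gamma^2$), and Lemma 3.2.3 of \cite{de2007extreme} (asymptotic independence of the two).

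Once (\ref{limit}) is in force with $\Gamma\sim\mathcal{N}(\lambda_2/(1-\rho),\gamma^2)$ and $\Theta\sim\mathcal{N}(0,\gamma^2)$ independent, Theorem \ref{int} gives
\[
\sqrt{n(1-\tau_n)}\left(\frac{\widehat{dev}_{\tau_n}(H)}{dev_{\tau_n}}-1\right)\overset{d}{\longrightarrow} m(\gamma)\Gamma+\Theta-(\lambda_1\xi_1+\lambda_2\xi_2).
\]
Since the right-hand side is an affine combination of two independent normals shifted by a deterministic constant, it is itself normal. The mean is $m(\gamma)\cdot\lambda_2/(1-\rho)+0-(\lambda_1\xi_1+\lambda_2\xi_2)$, which matches the stated centering, and the variance is $m(\gamma)^2\gamma^2+\gamma^2=\gamma^2(1+m(\gamma)^2)=v(\gamma)$, which matches the stated $v(\gamma)$.

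There is essentially no obstacle here: once Theorem \ref{int} is accepted, the corollary is a routine specialization and a one-line Gaussian computation. The only subtle point worth flagging is the independence of $\Gamma$ and $\Theta$, because without it the variance would pick up a covariance cross term; this is precisely why Lemma 3.2.3 of \cite{de2007extreme} must be cited. I would therefore write the proof as three short steps: (i) state the marginal Hill and empirical-quantile CLTs; (ii) invoke the asymptotic independence lemma to assemble (\ref{limit}); (iii) plug into Theorem \ref{int} and read off the mean and variance of the resulting Gaussian, matching them to the claimed expression.
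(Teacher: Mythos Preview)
Your proposal is correct and follows essentially the same approach as the paper: cite the marginal asymptotic normality results for the Hill estimator and the intermediate order statistic (Theorems~3.2.5 and~2.4.1 of \cite{de2007extreme}) together with their asymptotic independence (Lemma~3.2.3), then plug into Theorem~\ref{int} and read off the Gaussian mean and variance. The paper's appendix proof adds one technical detail you glossed over, namely using the Pareto representation $X_i\overset{d}{=}U(Y_i)$ and Cram\'er--Wold to upgrade the two marginal convergences plus independence to the joint convergence required in~(\ref{limit}).
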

{
\begin{remark}
We examined the quality of asymptotic approximations in Corollary \ref{cor1}
by plotting the sample quantiles versus the theoretical quantiles of the
limiting distribution. We observed that the sample variance of the estimators
are not quite close to the variance obtained via a naive use of the
theoretical Gaussian approximation. This issue has been observed in Section A.3 of
\cite{daouia2018estimation} and it is partially due to the estimation
uncertainty. Possible solutions of this issue have been proposed in
\cite{padoan2022joint}. Interested readers are referred to this paper for more details.
\end{remark}
} %

\subsubsection{Extreme level}

In this section, we propose an estimator of the deviatile at extreme
level and study its asymptotic normality. The extreme level means the
confidence level $p_{n}\uparrow1$ satisfying $n\left(  1-p_{n}\right)
\rightarrow c$ for some $c>0$ as $n\rightarrow \infty$. The basic idea is to
extrapolate the intermediate estimation to the very extreme level, which is
motivated by the methods in \cite{weissman1978estimation} and \cite{daouia2018estimation}.

Let $\left \{  p_{n},n\in \mathbb{N}\right \}  $ and $\left \{  \tau_{n}%
,n\in \mathbb{N}\right \}  $ be two sequences of levels satisfying that as
$n\rightarrow \infty$, $p_{n}\rightarrow1$, $n\left(  1-p_{n}\right)
\rightarrow c$ for some constant $c\in \mathbb{R}_{+}$, and $\tau
_{n}\rightarrow1$, $n\left(  1-\tau_{n}\right)  \rightarrow \infty$. Then, by
the assumption that the tail quantile function $U$ of a random variable $X\sim
F$ satisfies $U\in \mathrm{RV}_{\gamma}$, we have
\[
\frac{q_{p_{n}}}{q_{\tau_{n}}}=\frac{U\left(  \left(  1-p_{n}\right)
^{-1}\right)  }{U\left(  \left(  1-\tau_{n}\right)  ^{-1}\right)  }\sim \left(
\frac{1-p_{n}}{1-\tau_{n}}\right)  ^{-\gamma},\qquad n\rightarrow \infty.
\]
It then follows from Theorem \ref{2ndexp} that
\[
\frac{dev_{p_{n}}(X)}{dev_{\tau_{n}}(X)}\sim \frac{\beta_{\gamma}q_{p_{n}}%
}{\beta_{\gamma}q_{\tau_{n}}}\sim \left(  \frac{1-p_{n}}{1-\tau_{n}}\right)
^{-\gamma},\qquad n\rightarrow \infty.
\]
Based on the above relationship, we propose the following estimator of
$dev_{p_{n}}(X)$ at the extreme level
\[
\widehat{dev}_{p_{n}}^{\ast}=\left(  \frac{1-p_{n}}{1-{\tau_{n}}}\right)
^{-\widehat{\gamma}}\widehat{dev}_{\tau_{n}}.
\]

\begin{theorem}
\label{extreme}Assume that $U\in2\mathrm{RV}_{\gamma,\rho}$ with $0<\gamma
<1/2$, $\rho \leq0$ and auxiliary function $A(t)$.  As $n\rightarrow \infty$, let $\tau_{n},p_{n}\rightarrow1$ with $n\left(
1-\tau_{n}\right)  \rightarrow \infty$ and $n\left(  1-p_{n}\right)
\rightarrow c<\infty$, and further assume
\[
\sqrt{n\left(  1-\tau_{n}\right)  }\left(  \frac{\widehat{dev}_{\tau_{n}}%
}{dev_{\tau_{n}}}-1\right)  \overset{d}{\longrightarrow}\Delta \qquad
\text{and}\qquad \sqrt{n\left(  1-\tau_{n}\right)  }(\widehat{\gamma}%
-\gamma)\overset{d}{\longrightarrow}\Gamma.
\]
If $\sqrt{n\left(  1-\tau_{n}\right)  }q_{\tau_{n}}^{-1}\rightarrow \lambda
_{1}\in \mathbb{R}$, $\sqrt{n\left(  1-\tau_{n}\right)  }A((1-\tau_{n}%
)^{-1})\rightarrow \lambda_{2}\in \mathbb{R}$ and $\sqrt{n\left(  1-\tau
_{n}\right)  }/\log \left[  \left(  1-\tau_{n}\right)  /\left(  1-p_{n}\right)
\right]  \rightarrow \infty$, then as
$n\rightarrow \infty$,
\[
\frac{\sqrt{n\left(  1-\tau_{n}\right)  }}{\log \left[  \left(  1-\tau
_{n}\right)  /\left(  1-p_{n}\right)  \right]  }\left(  \frac{\widehat
{dev}_{p_{n}}^{\ast}}{dev_{p_{n}}}-1\right)  \overset{d}{\longrightarrow
}\Gamma.
\]

\end{theorem}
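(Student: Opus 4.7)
The plan is to factor $\widehat{dev}_{p_n}^{\ast}/dev_{p_n}$ multiplicatively into three pieces that isolate (i) the stochastic error from the tail-index estimator propagated through the extrapolation factor, (ii) the relative error of the intermediate-level estimator $\widehat{dev}_{\tau_n}$, and (iii) a purely deterministic ``extrapolation bias'' between $dev_{\tau_n}$ and $dev_{p_n}$. Writing $x_n:=(1-\tau_n)/(1-p_n)$ and $d_n:=\log x_n$, the identity
\[
\frac{\widehat{dev}_{p_n}^{\ast}}{dev_{p_n}} \;=\; x_n^{\widehat{\gamma}-\gamma}\cdot\frac{\widehat{dev}_{\tau_n}}{dev_{\tau_n}}\cdot\frac{x_n^{\gamma}\,dev_{\tau_n}}{dev_{p_n}}
\]
is the starting point. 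The strategy is to take logarithms, multiply by $\sqrt{n(1-\tau_n)}/d_n$, show that only the first factor contributes non-trivially, and conclude by Slutsky's theorem.

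For the first factor, the hypothesis $\sqrt{n(1-\tau_n)}(\widehat{\gamma}-\gamma)\overset{d}{\to}\Gamma$ combined with the rate condition $d_n/\sqrt{n(1-\tau_n)}\to 0$ gives $d_n(\widehat{\gamma}-\gamma)=o_P(1)$, so $\log x_n^{\widehat{\gamma}-\gamma}=d_n(\widehat{\gamma}-\gamma)$ and multiplication by $\sqrt{n(1-\tau_n)}/d_n$ directly produces the limit $\Gamma$. For the second factor, a Taylor expansion of $\log(1+u)$ applied to $u=\widehat{dev}_{\tau_n}/dev_{\tau_n}-1=O_P(1/\sqrt{n(1-\tau_n)})$ (which follows from the convergence to $\Delta$) shows that the normalized contribution is $O_P(1/d_n)$, and this vanishes since $n(1-\tau_n)\to\infty$ together with $n(1-p_n)\to c<\infty$ forces $d_n\to\infty$.

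The main obstacle is the third, deterministic factor, which I would attack using Theorem \ref{2ndexp}. Writing $dev_{\tau_n}=\beta_{\gamma}q_{\tau_n}(1+w(\tau_n))$ and $dev_{p_n}=\beta_{\gamma}q_{p_n}(1+w(p_n))$ reduces the task to controlling $x_n^{\gamma}q_{\tau_n}/q_{p_n}$ and the ratio $(1+w(\tau_n))/(1+w(p_n))$. Because $x_n\to\infty$, a pointwise application of the second-order regular variation of $U$ is not enough; instead I would invoke a uniform second-order Potter-type bound (Theorem B.2.18 of \cite{de2007extreme}) to replace $q_{p_n}/q_{\tau_n}$ by $x_n^{\gamma}(1+A((1-\tau_n)^{-1})\int_1^{x_n}s^{\rho-1}\,\mathrm{d}s\cdot(1+o(1)))$ at the growing scale $x=x_n$. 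The remaining residuals $w(\tau_n)-w(p_n)$ are bounded term by term via $\sqrt{n(1-\tau_n)}q_{\tau_n}^{-1}\to\lambda_1$, $\sqrt{n(1-\tau_n)}A((1-\tau_n)^{-1})\to\lambda_2$ and the regular variation of $A$ at index $\rho$, which controls $A((1-p_n)^{-1})$ in terms of $A((1-\tau_n)^{-1})$. Under the standing assumption $\sqrt{n(1-\tau_n)}/d_n\to\infty$, each of these deterministic contributions is negligible after the final normalization, and Slutsky's theorem assembles the three pieces into the limit $\Gamma$.
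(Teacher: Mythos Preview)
Your argument is correct and, at the level of ideas, matches the paper's: take logarithms, decompose multiplicatively, show that everything except the $\widehat{\gamma}$-error is killed by the normalization $\sqrt{n(1-\tau_n)}/d_n$, and conclude with Slutsky. The difference is purely in the bookkeeping. The paper introduces the auxiliary Weissman quantile estimator $\widehat q_{p_n}^{\ast}=x_n^{\widehat\gamma}\widehat q_{\tau_n}$ and writes
\[
\log\frac{\widehat{dev}_{p_n}^{\ast}}{dev_{p_n}}=\log\frac{\widehat q_{p_n}^{\ast}}{q_{p_n}}+\log\frac{\widehat{dev}_{\tau_n}}{dev_{\tau_n}}-\log\frac{\widehat q_{\tau_n}}{q_{\tau_n}}+\log\frac{dev_{\tau_n}}{q_{\tau_n}}-\log\frac{dev_{p_n}}{q_{p_n}},
\]
then dispatches the first term by a direct appeal to Theorem~4.3.8 of \cite{de2007extreme} (asymptotics of the Weissman extreme quantile estimator) and the third by Theorem~2.4.1 there, so that only the $w(\tau_n)-w(p_n)$ residual from Theorem~\ref{2ndexp} needs explicit work. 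Your three-term split instead isolates $d_n(\widehat\gamma-\gamma)$ cleanly and absorbs the intermediate quantile error into the assumed $\Delta$-convergence, but the price is that you must handle the purely deterministic piece $\log(x_n^{\gamma}q_{\tau_n}/q_{p_n})$ yourself via the uniform second-order Potter bound --- which is exactly the computation hidden inside Theorem~4.3.8. Either packaging works; the paper's is more modular (it reuses off-the-shelf results for $\widehat q_{p_n}^{\ast}$ and $\widehat q_{\tau_n}$), while yours is more self-contained and makes transparent that the only surviving contribution is the $\widehat\gamma$-error propagated through the extrapolation factor.
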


\bigskip

When $\gamma$ is estimated by the Hill estimator, we denote the extreme-level
estimator by $\widehat{dev}_{p_{n}}^{\ast}(H)$. Note that the limiting
distribution $\Gamma$ of the Hill estimator is $\mathcal{N}\left(
\lambda/(1-\rho),\gamma^{2}\right)  $. Substituting this into Theorem
\ref{extreme} yields the following result.

\begin{corollary}
Assume that $U(t)\in2\mathrm{RV}_{\gamma,\rho}$ with $0<\gamma<1$, $\rho \leq0$
and auxiliary function $A(t)$. As $n\rightarrow \infty$, let $\tau_{n},p_{n}\rightarrow1$ with $n\left(
1-\tau_{n}\right)  \rightarrow \infty$ and $n\left(  1-p_{n}\right)
\rightarrow c<\infty$, and further assume  that
\[
\sqrt{n\left(  1-\tau_{n}\right)  }\left(  \frac{\widehat{dev}_{\tau_{n}}%
}{dev_{\tau_{n}}}-1\right)  \overset{d}{\longrightarrow}\Delta.
\]
If $\sqrt{n\left(  1-\tau_{n}\right)  }q_{\tau_{n}}^{-1}\rightarrow \lambda
_{1}\in \mathbb{R}$, $\sqrt{n\left(  1-\tau_{n}\right)  }A((1-\tau_{n}%
)^{-1})\rightarrow \lambda_{2}\in \mathbb{R}$ and $\sqrt{n\left(  1-\tau
_{n}\right)  }/\log \left[  \left(  1-\tau_{n}\right)  /\left(  1-p_{n}\right)
\right]  \rightarrow \infty$, then as $n\rightarrow \infty$,
\[
\frac{\sqrt{n\left(  1-\tau_{n}\right)  }}{\log \left[  \left(  1-\tau
_{n}\right)  /\left(  1-p_{n}\right)  \right]  }\left(  \frac{\widehat
{dev}_{p_{n}}^{\ast}(H)}{dev_{p_{n}}(H)}-1\right)  \overset{d}{\longrightarrow
}\mathcal{N}\left(  \lambda/(1-\rho),\gamma^{2}\right)  .
\]

\end{corollary}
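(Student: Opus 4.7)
The plan is to treat this corollary as a direct specialization of Theorem \ref{extreme} to the case where the tail index is estimated by the Hill estimator. The only additional ingredient needed is the asymptotic normality of $\widehat{\gamma}_H$, which has already been recalled in the discussion preceding Corollary \ref{cor1}. So the proof is essentially a substitution step.

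First, I would take $k=\lfloor n(1-\tau_n)\rfloor$, so that $\widehat{\gamma}_H$ is based on the top $k$ order statistics. Under the assumption $U\in 2\mathrm{RV}_{\gamma,\rho}$ with auxiliary function $A$ and the intermediate-sequence conditions $k\to\infty$, $k/n\to 0$, $\sqrt{k}\,A(n/k)\to\lambda_2\in\mathbb{R}$, Theorem 3.2.5 of \cite{de2007extreme} yields
$$
\sqrt{k}\,(\widehat{\gamma}_H-\gamma)\overset{d}{\longrightarrow}\mathcal{N}\!\left(\frac{\lambda_2}{1-\rho},\gamma^2\right).
$$
The hypothesis $\sqrt{n(1-\tau_n)}\,A((1-\tau_n)^{-1})\to\lambda_2$ in the corollary is equivalent, up to $o(1)$, to $\sqrt{k}\,A(n/k)\to\lambda_2$, because $n(1-\tau_n)/k\to 1$ and $A$ is regularly varying of index $\rho\le 0$. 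Hence in this setting $\Gamma\overset{d}{=}\mathcal{N}(\lambda_2/(1-\rho),\gamma^2)$, exactly the $\Gamma$ appearing in the hypotheses of Theorem \ref{extreme}.

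Second, I would verify that all the remaining hypotheses of Theorem \ref{extreme} are satisfied. The second-order regular variation, the limits $n(1-\tau_n)\to\infty$ and $n(1-p_n)\to c$, the intermediate consistency $\sqrt{n(1-\tau_n)}(\widehat{dev}_{\tau_n}/dev_{\tau_n}-1)\overset{d}{\to}\Delta$, the bias conditions on $q_{\tau_n}^{-1}$ and $A$, and the growth condition $\sqrt{n(1-\tau_n)}/\log[(1-\tau_n)/(1-p_n)]\to\infty$ are all imposed directly in the corollary. Applying Theorem \ref{extreme} then gives
$$
\frac{\sqrt{n(1-\tau_n)}}{\log[(1-\tau_n)/(1-p_n)]}\left(\frac{\widehat{dev}^{\ast}_{p_n}(H)}{dev_{p_n}(H)}-1\right)\overset{d}{\longrightarrow}\Gamma,
$$
and plugging in the Gaussian law identified in the first step produces the stated result (with the $\lambda$ in the stated limiting mean understood as $\lambda_2$, consistent with the notation of Theorem \ref{extreme}).

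There is no real obstacle here; the only point worth checking carefully is the translation of the bias condition $\sqrt{n(1-\tau_n)}\,A((1-\tau_n)^{-1})\to\lambda_2$ into the form $\sqrt{k}\,A(n/k)\to\lambda_2$ needed to invoke the de Haan--Ferreira theorem. This is routine, since with $k=\lfloor n(1-\tau_n)\rfloor$ we have $(1-\tau_n)^{-1}/(n/k)\to 1$ and the regular variation of $A$ at infinity preserves the limit. Everything else is a direct substitution into Theorem \ref{extreme}.
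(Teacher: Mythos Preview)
Your proposal is correct and follows exactly the paper's approach: the paper simply states that substituting the known limiting distribution $\Gamma\overset{d}{=}\mathcal{N}(\lambda/(1-\rho),\gamma^2)$ of the Hill estimator into Theorem \ref{extreme} yields the result. Your additional remark on translating the bias condition $\sqrt{n(1-\tau_n)}A((1-\tau_n)^{-1})\to\lambda_2$ into $\sqrt{k}A(n/k)\to\lambda_2$ via regular variation of $A$ is a useful clarification, but otherwise the argument is the same substitution step.
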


{
\subsection{Serial dependence}

In the last subsection, we developed the asymptotic normality of the
estimators for deviatile when  $X, X_{1},...,X_{n}$ are i.i.d.. In this subsection, we consider the sample to have serial dependence, that is it is from a $\beta$-mixing time series. Its formal definition is given as follows. Suppose
$\{U_{i}\}_{i\in \mathbb{N}}$ is a sequence of uniformly distributed random
variables. It is $\beta$-mixing (or absolute regular) if
\[
\beta(k)=\sup_{l\in \mathbb{N}}E\left[  \sup_{A\in \mathcal{B}_{l+k+1}^{\infty}%
}\left \vert \Pr \left(  A|\mathcal{B}_{1}^{l}\right)  -\Pr(A)\right \vert
\right]  \rightarrow0,
\]
as $k\rightarrow \infty$, where $\mathcal{B}_{1}^{l}$ and $\mathcal{B}%
_{l+k+1}^{\infty}$ denote the $\sigma$-fields generated by $\{U_{i}\}_{1\leq
i\leq l}$ and $\{U_{i}\}_{l+k+1\leq i<\infty}$. In this section we investigate
the asymptotic normality of the estimators for deviatile when the sample is
$\beta$-mixing.

Define the tail empirical function
\[
Q_{n}(t)=X_{n-[k_{n}t],n},\qquad0\leq t\leq1.
\]
It is very useful in the estimations of high quantiles and the tail index
$\gamma$. For example,
\[
\widehat{q}_{\tau_{n}}=Q_{n}(1)
\]
with $k_{n}=n(1-\tau_{n})$. Many estimators of $\gamma$ can be represented as a
tail functional $T(Q_{n})$. For example the Hill estimator has the functional
\[
T_{H}(z)=\int_{0}^{1}\log \frac{z(t)}{z(1)}dt.
\]
For details, see \cite{drees1998smooth}. The asymptotic behavior of $Q_{n}%
(t)$, as as well as $T(Q_{n})$ and extreme quantiles, for the $\beta$-mixing
time series have been well studied \cite{drees2000weighted},
\cite{drees2003extreme}, \cite{de2016adapting}, and \cite{chavez2018extreme}. These results enable us
to obtain the asymptotic normality of $\widehat{dev}_{\tau_{n}}$ when
$X_{i}$'s are $\beta$-mixing. Before we present the theorem, we introduce a
few regularity conditions on the $\beta$-mixing coefficient.

{\textbf{Regularity condition of $\beta$ (C$_{R}$)}}: There exist $\varepsilon>0$, a
function $r$ and a sequence $l_{n}$ such that, as $n\rightarrow \infty$,

\begin{description}
\item[(C1)] $\frac{\beta(l_{n})}{l_{n}}n+l_{n}\frac{\log^{2}k}{\sqrt{k}%
}\rightarrow0;$

\item[(C2)] $\frac{n}{l_{n}k}cov\left(  \sum_{i=1}^{l_{n}}1_{\left \{
X_{i}>F^{\leftarrow}(1-kx/n)\right \}  },\sum_{i=1}^{l_{n}}1_{\left \{
X_{i}>F^{\leftarrow}(1-ky/n)\right \}  }\right)  \rightarrow r(x,y)$, for
any $0\leq x,y\leq1+\varepsilon$;

\item[(C3)] For some constant $C:$%
\[
\frac{n}{l_{n}k}E\left[  \left(  \sum_{i=1}^{l_{n}}1_{\left \{  F^{\leftarrow
}(1-ky/n)<X_{i}\leq F^{\leftarrow}(1-kx/n)\right \}  }\right)  ^{4}\right]
\leq C(y-x),
\]
for any $0\leq x,y\leq1+\varepsilon$ and $n\in \mathbb{N}$.
\end{description}

For notation simplicity, we only present the asymptotic normality of
$\widehat{dev}_{\tau_{n}}(H)$ when $\gamma$ is estimated by the Hill
estimator. Similar results hold true for other estimators as long as the
functional $T$ satisfies certain conditions; see Corollaries 3.2 and 3.3 of
\cite{drees2000weighted}.

\begin{lemma}
\label{dep int}Let $\left(  X_{1},X_{2},...,X_{n}\right)  $ be a stationary $\beta$-mixing
time series satisfying the condition (C$_{R}$) with a continuous common
marginal distribution function $F$ such that its tail quantile function
$U\in2\mathrm{RV}_{\gamma,\rho}$ with $0<\gamma<1/2$, $\rho \leq0$ and
auxiliary function $A(t)$. Suppose that $k$ is a sequence such that as
$n\rightarrow \infty$, $k\rightarrow \infty$, $k/n\rightarrow \infty$, and
$\sqrt{k}A(n/k)\rightarrow \lambda \in \mathbb{[}0,\infty)$. We have as
$n\rightarrow \infty$
\begin{equation}
\sqrt{k}\left(  \widehat{\gamma}_{H}-\gamma,\frac
{\widehat{q}_{\tau_{n}}}{q_{\tau_{n}}}-1\right)  \overset{d}{\longrightarrow} \left(  N\left(  \frac{\lambda}{1-\rho
},\sigma_{H,\gamma}^{2}\right)  ,N(0,\gamma^{2}r(1,1))\right)  ,\label{di}%
\end{equation}
where $\sigma_{H,\gamma}^{2}=\gamma^{2}\left(  \int_{[0,1]^{2}}\left(  st\right)  ^{-1}r(t,s)dsdt-2\int
_{0}^{1}t^{-1}r(t,1)dt+r(1,1)\right) $ and $r(\cdot,\cdot)$ is
the covariance function defined in the condition (C$_{R}$). The limiting
distribution $\left(  N\left(  \frac{\lambda}{1-\rho},\sigma_{H,\gamma}%
^{2}\right)  ,N(0,\gamma^{2}r(1,1))\right)  $ is jointly normal with
covariance $\varsigma=\gamma^{2}\left(  \int_{0}^{1}t^{-1}%
r(t,1)dt-r(1,1)\right) $. 
\end{lemma}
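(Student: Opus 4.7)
The strategy is to derive both limits from a single weighted weak-convergence statement for the tail quantile process $Q_n$ and then to identify each statistic as a continuous functional of it. Under (C1)--(C3) together with the second-order condition $U\in 2\mathrm{RV}_{\gamma,\rho}$ and the scaling $\sqrt{k}A(n/k)\to\lambda$, \cite{drees2000weighted} (together with \cite{drees2003extreme}) supplies, in an appropriate weighted sup-norm space on $(0,1]$, the tail-process approximation
\begin{equation*}
e_n(t):=\sqrt{k}\left(\frac{Q_n(t)}{U(n/k)}-t^{-\gamma}\right)\overset{d}{\Longrightarrow}\gamma\,t^{-\gamma-1}W(t)+\lambda\,t^{-\gamma}\,b_\rho(t),
\end{equation*}
where $W$ is a centred Gaussian process with $\mathrm{cov}(W(s),W(t))=r(s,t)$ (well-defined by (C2)) and $b_\rho(t)=\int_1^{1/t}u^{\rho-1}\,du$ is the deterministic bias produced by the second-order condition.

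The intermediate quantile part is then immediate: since $\widehat{q}_{\tau_n}/q_{\tau_n}-1=Q_n(1)/U(n/k)-1=e_n(1)/\sqrt{k}$ and $b_\rho(1)=0$, the continuous-mapping theorem yields $\sqrt{k}(\widehat{q}_{\tau_n}/q_{\tau_n}-1)\overset{d}{\longrightarrow}\gamma W(1)\sim N(0,\gamma^2 r(1,1))$. For the Hill component I would invoke the Hadamard differentiability of $T_H(z)=\int_0^1\log(z(t)/z(1))\,dt$ at $z_0(t)=U(n/k)\,t^{-\gamma}$ in the sense of \cite{drees1998smooth}; the derivative in direction $U(n/k)\,e_n$ equals $\int_0^1 t^\gamma e_n(t)\,dt-e_n(1)$, so
\begin{equation*}
\sqrt{k}(\widehat{\gamma}_H-\gamma)=\int_0^1 t^\gamma e_n(t)\,dt-e_n(1)+o_p(1).
\end{equation*}
Substituting the weak limit of $e_n$, the random part reduces to $\gamma\int_0^1 t^{-1}W(t)\,dt-\gamma W(1)$ and the deterministic part to $\lambda\int_0^1 b_\rho(t)\,dt=\lambda/(1-\rho)$, recovering the first marginal in the lemma.

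Joint convergence follows because both limits are continuous linear functionals of the same Gaussian process $W$, so the pair is automatically jointly Gaussian. Bilinear covariance computations then give $\mathrm{Var}(\gamma\int_0^1 t^{-1}W(t)\,dt-\gamma W(1))=\sigma_{H,\gamma}^2$ and $\mathrm{cov}(\gamma\int_0^1 t^{-1}W(t)\,dt-\gamma W(1),\gamma W(1))=\varsigma$, matching the covariance structure stated in the lemma. The main obstacle I anticipate is justifying the Hadamard linearisation in a \emph{weighted} sup-norm strong enough to accommodate the singular behaviour of $e_n$ near $t=0$; this is precisely the weighted $T_H$-expansion developed in \cite{drees1998smooth}, available under $\beta$-mixing via \cite{drees2000weighted}, so once (C1)--(C3) are verified against the hypotheses of that weighted-approximation theorem, the remainder of the proof consists only of the explicit bias, variance and covariance computations indicated above together with an application of the continuous mapping theorem.
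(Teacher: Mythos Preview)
Your proposal is correct and follows essentially the same route as the paper: both arguments extract a uniform weighted approximation for the tail quantile process under $\beta$-mixing (Drees-type), represent $\widehat\gamma_H$ and $\widehat q_{\tau_n}$ as functionals of that process, and read off joint normality and the covariance $\varsigma$ from the fact that both limits are linear in the \emph{same} centred Gaussian process $W$ (the paper's $e$). The only noteworthy difference is cosmetic: the paper works with the \emph{logarithmic} tail-process expansion $\sqrt{k}\bigl(\log(Q_n(t)/U(n/k))+\gamma\log t\bigr)=\gamma t^{-1}e(t)+\sqrt{k}\widetilde A(n/k)\,(t^{-\rho}-1)/\rho+o_p(1)t^{-1/2-\varepsilon}$, so that $\widehat\gamma_H-\gamma=\int_0^1\log(Q_n(t)/Q_n(1))\,dt+\gamma\int_0^1\log t\,dt$ is already \emph{linear} in the approximated object and the limit $\gamma\int_0^1 t^{-1}e(t)\,dt-\gamma e(1)+\lambda/(1-\rho)$ falls out by direct integration, with no Hadamard linearisation step to justify. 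Your ratio-process formulation plus the functional delta method is equivalent and leads to the identical Gaussian functional, but the log form buys a small simplification precisely at the point you flagged as the ``main obstacle''.
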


\bigskip

Lemma \ref{dep int} verifies the joint convergence in (\ref{limit}) of Theorem \ref{int},
then we can immediately obtain the asymptotic normality of $\widehat
{dev}_{\tau_{n}}(H)$ for $\beta$-mixing series.

\begin{theorem}
\label{AN int}Under the conditions of Lemma \ref{dep int}, if $\sqrt{n\left(
1-\tau_{n}\right)  }q_{\tau_{n}}^{-1}\rightarrow \lambda_{1}\in \mathbb{R}$ and
$\sqrt{n\left(  1-\tau_{n}\right)  }A\left(  \left(  1-\tau_{n}\right)
^{-1}\right)  \rightarrow \lambda_{2}\in \mathbb{R}$, then we have that as
$n\rightarrow \infty$,
\[
\sqrt{n\left(  1-\tau_{n}\right)  }\left(  \frac{\widehat{dev}_{\tau_{n}}(H)%
}{dev_{\tau_{n}}}-1\right)  \overset{d}{\longrightarrow}N(\mu,\sigma^{2}),
\]
where $\mu=m(\gamma)\frac{\lambda_{2}}{1-\rho}-\left(  \lambda_{1}\xi
_{1}+\lambda_{2}\xi_{2}\right)  $ and $\sigma^{2}=m^{2}(\gamma)\sigma
_{H,\gamma}^{2}+2m(\gamma)\varsigma+\gamma^{2}r(1,1)$ with $\varsigma$ defined in Lemma
\ref{dep int}.
\end{theorem}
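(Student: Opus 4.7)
The plan is to read Theorem~\ref{int} as an abstract device that converts the joint convergence \eqref{limit} into a limit law for $\widehat{dev}_{\tau_n}/dev_{\tau_n}-1$, and then to invoke Lemma~\ref{dep int} to supply that joint convergence in the $\beta$-mixing setting. First, I would note that the derivation behind Theorem~\ref{int}---the delta-method linearization of the map $(g,q)\mapsto \beta_g\, q$ at $(\gamma,q_{\tau_n})$, together with the deterministic second-order expansion of $dev_{\tau_n}$ supplied by Theorem~\ref{2ndexp}---depends on the data only through \eqref{limit}. The coefficient $m(\gamma)$ on $\Gamma$ arises as the logarithmic derivative $(\log\beta_g)'|_{g=\gamma}$, and the coefficient $1$ on $\Theta$ reflects the fact that $\beta_\gamma$ enters only as a multiplicative constant in $q$. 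Since independence is nowhere used, Theorem~\ref{int} remains valid verbatim for any sampling scheme that satisfies \eqref{limit}.

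Second, I would invoke Lemma~\ref{dep int}, which furnishes precisely this joint convergence for $\widehat\gamma=\widehat\gamma_H$ under (C$_R$), with limit $(\Gamma,\Theta)$ jointly Gaussian satisfying $\Gamma\sim \mathcal N(\lambda_2/(1-\rho),\sigma_{H,\gamma}^2)$, $\Theta\sim \mathcal N(0,\gamma^2 r(1,1))$ and $\mathrm{cov}(\Gamma,\Theta)=\varsigma$. Applying Theorem~\ref{int} then yields
\[
\sqrt{n(1-\tau_n)}\left(\frac{\widehat{dev}_{\tau_n}(H)}{dev_{\tau_n}}-1\right)\overset{d}{\longrightarrow} m(\gamma)\Gamma+\Theta-(\lambda_1\xi_1+\lambda_2\xi_2).
\]

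The final step is a one-line Gaussian computation. The random variable $m(\gamma)\Gamma+\Theta$ is a linear combination of jointly normal variables, hence normal with mean $m(\gamma)\lambda_2/(1-\rho)$ and variance $m(\gamma)^2\sigma_{H,\gamma}^2+2m(\gamma)\varsigma+\gamma^2 r(1,1)$. Subtracting the deterministic bias $\lambda_1\xi_1+\lambda_2\xi_2$ shifts the mean to the announced $\mu$ and leaves the variance equal to the announced $\sigma^2$, completing the identification.

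The only real obstacle I anticipate is the ``Theorem~\ref{int} is independence-free'' step: I would audit its proof to confirm that every intermediate bound is expressed either through \eqref{limit} or through deterministic regular-variation estimates coming from Theorem~\ref{2ndexp}. Should any remainder in the delta-method expansion have been controlled by an i.i.d.\ moment inequality, it would need to be replaced by the corresponding $\beta$-mixing bound from \cite{drees2000weighted}. Given that Theorem~\ref{2ndexp} already encodes the bias deterministically in $q_{\tau_n}^{-1}$ and $A((1-\tau_n)^{-1})$, and that Lemma~\ref{dep int} is designed precisely to replace the i.i.d.\ stochastic input with its $\beta$-mixing counterpart, I expect this audit to be cosmetic rather than substantive.
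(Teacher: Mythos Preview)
Your proposal is correct and follows exactly the paper's approach: the paper's proof consists of the single observation that Lemma~\ref{dep int} verifies the joint convergence \eqref{limit} required by Theorem~\ref{int}, after which the conclusion of Theorem~\ref{int} applies verbatim and the Gaussian parameters are read off. Your additional audit that Theorem~\ref{int} uses the data only through \eqref{limit} and the deterministic expansion of Theorem~\ref{2ndexp} is a welcome elaboration, but the paper simply takes this for granted.
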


The asymptotic normality of the extreme level estimator $\widehat{dev}_{p_{n}%
}^{\ast}(H)$ follows from Theorems \ref{extreme} and \ref{AN int}:

\begin{theorem}
Under the conditions of Theorem \ref{AN int}, as $n\rightarrow \infty$, let
$\tau_{n},p_{n}\rightarrow1$ with $n\left(  1-\tau_{n}\right)  \rightarrow
\infty$ and $n\left(  1-p_{n}\right)  \rightarrow c<\infty$, and further
assume $\sqrt{n\left(  1-\tau_{n}\right)  }/\log \left[  \left(  1-\tau
_{n}\right)  /\left(  1-p_{n}\right)  \right]  \rightarrow \infty$. We have as
$n\rightarrow \infty$,
\[
\frac{\sqrt{n\left(  1-\tau_{n}\right)  }}{\log \left[  \left(  1-\tau
_{n}\right)  /\left(  1-p_{n}\right)  \right]  }\left(  \frac{\widehat
{dev}_{p_{n}}^{\ast}(H)}{dev_{p_{n}}}-1\right)  \overset{d}{\longrightarrow
}N\left(  \frac{\lambda_{2}}{1-\rho},\sigma_{H,\gamma}^{2}\right)  .
\]
\end{theorem}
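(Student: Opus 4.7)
The plan is to recognize that Theorem \ref{extreme} is actually stated in an abstract form: its hypotheses are just (i) a $\sqrt{n(1-\tau_n)}$-rate asymptotic normality statement for the intermediate deviatile estimator and (ii) a $\sqrt{n(1-\tau_n)}$-rate asymptotic normality statement for $\widehat{\gamma}$, together with the regularity conditions on $\lambda_1,\lambda_2$ and the log-ratio. Nothing in the proof of Theorem \ref{extreme} uses independence; the i.i.d.\ assumption was only needed upstream to establish those two input convergences. Therefore the strategy is simply to re-derive the two input convergences in the $\beta$-mixing setting (both of which have already been done in the subsection) and then invoke Theorem \ref{extreme} as a black box.

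First I would apply Theorem \ref{AN int} with $k=n(1-\tau_n)$ to obtain
\[
\sqrt{n(1-\tau_n)}\left(\frac{\widehat{dev}_{\tau_n}(H)}{dev_{\tau_n}}-1\right)\overset{d}{\longrightarrow}\Delta\sim N(\mu,\sigma^2),
\]
which supplies the first input of Theorem \ref{extreme}. Next I would invoke Lemma \ref{dep int} (the Hill component) to obtain
\[
\sqrt{n(1-\tau_n)}\,(\widehat{\gamma}_H-\gamma)\overset{d}{\longrightarrow}\Gamma\sim N\!\left(\tfrac{\lambda_2}{1-\rho},\sigma_{H,\gamma}^2\right),
\]
where I would take care to check that the constant $\lambda$ appearing in Lemma \ref{dep int} (through the assumption $\sqrt{k}A(n/k)\to \lambda$) coincides with $\lambda_2$ here: indeed, with $k=n(1-\tau_n)$, one has $n/k=(1-\tau_n)^{-1}$, so $\sqrt{k}A(n/k)=\sqrt{n(1-\tau_n)}A((1-\tau_n)^{-1})\to \lambda_2$.

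Now I would plug these into Theorem \ref{extreme}. Its conclusion asserts that
\[
\frac{\sqrt{n(1-\tau_n)}}{\log[(1-\tau_n)/(1-p_n)]}\left(\frac{\widehat{dev}_{p_n}^{\ast}(H)}{dev_{p_n}}-1\right)\overset{d}{\longrightarrow}\Gamma.
\]
The extra $\log$ factor in the denominator kills the contribution of $\Delta$ (since $\Delta$ was only normalized at rate $\sqrt{n(1-\tau_n)}$, whereas the slower rate here produces an asymptotically negligible term), so only $\Gamma$ survives in the limit. Substituting the explicit form of $\Gamma$ from Lemma \ref{dep int} yields exactly the stated $N(\lambda_2/(1-\rho),\sigma_{H,\gamma}^2)$ limit.

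I do not foresee a serious obstacle: everything is a repackaging of already-proved results. The only point that requires care is the matching of the two normalizing constants $\lambda$ (in Lemma \ref{dep int}) and $\lambda_2$ (in the present theorem), and the verification that the regularity conditions (C$_R$) transferred through Theorem \ref{AN int} are compatible with the extrapolation step. Since Theorem \ref{extreme} is purely a consequence of the extrapolation identity $dev_{p_n}/dev_{\tau_n}\sim ((1-p_n)/(1-\tau_n))^{-\gamma}$ from Theorem \ref{2ndexp} plus a delta-method argument on $\widehat{\gamma}$, no dependence structure enters at that stage, and the argument goes through verbatim.
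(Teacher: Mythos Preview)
Your proposal is correct and matches the paper's approach: the paper simply states that the result ``follows from Theorems \ref{extreme} and \ref{AN int}'' without further elaboration, and your argument spells out precisely this, using Theorem \ref{AN int} for the $\Delta$-input and (the Hill component of) Lemma \ref{dep int} for the $\Gamma$-input before invoking Theorem \ref{extreme}. Your observation that Theorem \ref{extreme} is dependence-agnostic and your check that $\lambda=\lambda_2$ are exactly the details one needs to make the one-line justification rigorous.
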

}

\section{Simulation}\label{simulation}

In this section, we examine the performance of the proposed estimators through
simulations. The intermediate level estimator for the deviatile with
the Hill estimator for $\gamma$ is denoted by $\widehat{dev}_{\tau_{n}}(H)$,
and the extreme-level estimator with the Hill estimator is denoted by $\widehat
{dev}_{p_{n}}^{\ast}(H)$. The underlying risk is assumed to follow either a
Pareto($\alpha,\theta$) or Student's $t_\alpha$-distribution.
\subsection{Independence}
We first investigate the performance of both estimators when the data are i.i.d. observations.
\subsubsection{Intermediate level}

We first consider the
Pareto($\alpha,\theta$) distribution. In the simulation
example, we choose the level $\tau_{n}=1-k/n$ to estimate the deviatile, which is a common choice for the
intermediate level in the literature. {As discussed in \cite{ferreira2003optimising} and \cite{daouia2018estimation}, the intermediate sequence $k(n)$ can be chosen from
$\log n$ to $n/\log n$.}

We draw 1000 random samples with sample size
$n=1000$, $4000$ and $10000$ to investigate the effect of sample
size. We also
compare the estimations for the Pareto($\alpha,\theta$) distribution with different heavy
tailedness, i.e., $\alpha=3$ and $5$ and $\theta=1$. Here, we follow the common practice in
the extreme value statistics to choose $k$ from the first stable region of the
Hill plot.


In Table \ref{table1}, we report the true values of $dev_{\tau_{n}}$ {(in which the true value of the expectile is calculated numerically with R)}, and the
sample means, sample standard deviations and mean squared errors (MSEs) of $\frac{\widehat{dev}_{\tau_{n}}(H)}{dev_{\tau_{n}}}$. Overall, the estimator $\widehat{dev}%
_{\tau_{n}}(H)$ performs well. It performs
better for larger sample size and the lighter-tailed distribution.

\begin{table}[ptb]
\centering
\setlength{\tabcolsep}{3mm}{\
\begin{tabular}
[c]{ccccc}\hline
		$(n,\alpha,k)$ & $\tau_{n}$ 
	  &${dev}_{\tau_n}$&$\frac{\widehat{dev}_{\tau_n}(H)}{dev_{\tau_n}}$ & MSE \\\hline
$(1000,3,50)$ & $0.95$&$2.9759$ & $2.1395 (3.2003)$ & $11.5262$ \\ 
$(1000,3,30)$ & $0.97$&$3.6631$ & $1.7018(1.5947)$ & $3.0323$ \\ 
$(1000,3,10)$ & $0.99$ &$5.6010$ & $1.3314(1.8360)$ & $3.4766$ \\ 
$(4000,3,200)$ & $0.95$ &$2.9759$ & $2.3923(1.9202)$ & $5.6217$ \\ 
$(4000,3,120)$ & $0.97$ &$3.6631$ & $1.7807(1.0490)$ & $1.7087$ \\ 
$(4000,3,40)$ & $0.99$ &$5.6010$ & $1.4125(0.9987)$ & $1.1665$ \\ 
$(10000,3,500)$ & $0.95$&$2.9759$ & $2.2691(1.6456)$ & $4.3160$ \\ 
$(10000,3,300)$ & $0.97$ & $3.6631$ & $1.6748(1.2987)$ & $2.1404$ \\ 
$(10000,3,100)$ & $0.99$ & $5.6010$ & $1.3258(0.7755)$ & $0.7069$ \\ 
$(1000,5,50)$ & $0.95$ &$0.9562$ & $1.5664(0.7861)$ & $0.9382$ \\ 
$(1000,5,30)$ & $0.97$ & $1.1345$ & $1.4351(0.6487)$ & $0.6096$ \\ 
$(1000,5,10)$ & $0.99$ &$1.5930$ & $1.3325(1.1040)$ & $1.3282$ \\ 
$(4000,5,200)$ & $0.95$ &$0.9562$ & $1.4419(0.1797)$ & $0.2275$ \\ 
$(4000,5,120)$ & $0.97$ & $1.1345$ & $1.3167(0.1696)$ & $0.1291$ \\ 
$(4000,5,40)$ & $0.99$ &$1.5930$ & $1.2007(0.2061)$ & $0.0827$ \\ 
$(10000,5,500)$ & $0.95$ &$0.9562$ & $1.4272 (0.1142)$ & $0.1955$ \\ 
$(10000,5,300)$ & $0.97$ &$1.1345$ & $1.2961(0.0967)$ & $0.0970$ \\ 
$(10000,5,100)$ & $0.99$ &$1.5930$ & $1.1734(0.1048)$ & $0.0410$ \\ \hline
\end{tabular}
} \caption{{\protect \small Based on 1000 samples of size $n$ of the
Pareto($\alpha,\theta$) distribution, the true values of $dev_{\tau_{n}}$, and the
sample means, sample standard deviations and MSEs of  $\frac{\widehat{dev}_{\tau_{n}}(H)}{dev_{\tau_{n}}}$
are presented for various thresholds $k$. }}%
\label{table1}%
\end{table}

Next, we consider	the Student's $t_{\alpha}$-distribution with the density function defined in \eqref{st}. Similar to the simulation example of the Pareto($\alpha,\theta$) distribution, we choose
$\tau_{n}=1-k/n$ and draw 1000 random samples with sample size $n=1000$,
$4000$ and $10000$ from the Student's $t_{\alpha}$-distribution for
$\alpha=3$ and $5$. In Table \ref{table2}, we report the true values of
$dev_{\tau_{n}}$ {(in which the true value of the expectile is calculated numerically with R)}, and the sample means, sample standard deviations and MSEs of $\frac{\widehat{dev}_{\tau_{n}}(H)}{dev_{\tau_{n}}}$. Overall, the estimator $\widehat{dev}%
_{\tau_{n}}(H)$ performs well for the Student's $t_{\alpha}$-distribution,
even for smaller sample sizes. It performs better for the lighter-tailed case.
	
\begin{table}[ptb]
\centering
\setlength{\tabcolsep}{3mm}{\
\begin{tabular}
[c]{ccccc}\hline
		$(n,\alpha,k)$ & $\tau_{n}$ 
	  &${dev}_{\tau_n}$&$\frac{\widehat{dev}_{\tau_n}(H)}{dev_{\tau_n}}$ & MSE\\
	  \hline
$(1000,3,50)$ & $0.95$ & $3.9685$ & $1.4301(1.2003)$ & $1.6242$ \\ 
$(1000,3,30)$ & $0.97$ & $4.6813$ & $1.3407(1.4762)$ & $2.2930$ \\ 
$(1000,3,10)$ & $0.99$ & $6.6864$ & $1.2450(1.3797)$ & $1.9616$ \\ 
$(4000,3,200)$ & $0.95$ & $3.9685$ & $1.2339(0.2912)$ & $0.1394$ \\ 
$(4000,3,120)$ & $0.97$ & $4.6813$ & $1.1314(0.2670)$ & $0.0885$ \\ 
$(4000,3,40)$ & $0.99$ & $6.6864$ & $1.1339(0.6583)$ & $0.4508$ \\ 
$(10000,3,500)$ & $0.95$ & $3.9685$ & $1.1989(0.1424)$ & $0.0598$ \\ 
$(10000,3,300)$ & $0.97$ & $4.6813$ & $1.0976(0.1301)$ & $0.0264$ \\ 
$(10000,3,100)$ & $0.99$ & $6.6864$ & $1.0582(0.2318)$ & $0.0571$ \\ 
$(1000,5,50)$ & $0.95$ & $2.5862$ & $1.0381(0.1610)$ & $0.0273$ \\ 
$(1000,5,30)$ & $0.97$ & $2.9097$ & $1.0104(0.2032)$ & $0.0413$ \\ 
$(1000,5,10)$ & $0.99$ & $3.7075$ & $1.0384(0.3597)$ & $0.1307$ \\ 
$(4000,5,200)$ & $0.95$ & $2.5862$ & $1.0167(0.0744)$ & $0.0058$ \\ 
$(4000,5,120)$ & $0.97$ & $2.9097$ & $0.9880(0.0718)$ & $0.0053$ \\ 
$(4000,5,40)$ & $0.99$ & $3.7075$ & $0.9846(0.0988)$ & $0.0100$ \\ 
$(10000,5,500)$ & $0.95$ & $2.5862$ & $1.0098(0.0426)$ & $0.0019$ \\ 
$(10000,5,300)$ & $0.97$ & $2.9097$ & $0.9811(0.0436)$ & $0.0023$ \\ 
$(10000,5,100)$ & $0.99$ & $3.7075$ & $0.9751(0.0566)$ & $0.0038$ \\ 
		\hline
\end{tabular}
} \caption{{\protect \small Based on 1000 samples of size $n$ of
the Student's $t_\alpha$-distribution, the true values of $dev_{\tau_{n}}$, and the
sample means, sample standard deviations and MSEs of  $\frac{\widehat{dev}_{\tau_{n}}(H)}{dev_{\tau_{n}}}$
are presented for various thresholds $k$. }}%
\label{table2}%
\end{table}

\subsubsection{Extreme level}

In the simulation study of the estimation at the extreme level, we again consider
1000 random samples with sample size $n=1000$, $4000$ and $10000$ generated
from the Pareto($\alpha,1$) and Student's $t_{\alpha}$-distributions with
$\alpha=3$ and $5$. The extreme level $p_{n}$ is set at $0.9996$. Let
$\tau_{n}=1-k/n$, which satisfies the conditions of Theorem \ref{extreme}.

The values of $\widehat{dev}_{p_{n}}^{\ast
}(H)$, the sample means, sample standard
deviations and MSEs of  $\frac{\widehat{dev}_{p_{n}}^{\ast
}(H)}{dev_{p_{n}}}$ for the Pareto($\alpha,1$) distribution are reported in Table
\ref{table3}. For comparison, the true value of $dev_{0.9996}$  for
the Pareto($\alpha,1$) distribution is $17.8283$ when $\alpha=3$ and
is $3.7609$ when $\alpha=5$. We repeat similar experiments for
the Student's $t_{\alpha}$-distribution in Table
\ref{table4}. The true value of $dev_{0.9996}$  for the Student's $t_{\alpha}$-distribution is $19.3173$ when $\alpha=3$ and is $7.2585$ when $\alpha=5$. {Overall, the estimator $\widehat{dev}_{p_{n}}^{\ast}(H)$ performs better for light-tailed risks and better for the Student's $t_{\alpha}$-distribution. The selection of $k$ affects the performance of the estimator a lot and the investigation for an optimal choice of $k$ is left for future research.}  

\begin{table}[ptb]
\centering
\setlength{\tabcolsep}{3mm}{\
\begin{tabular}
[c]{ccccc}\hline
			$(n,\alpha,k)$ & $\tau_{n}$ & $\widehat{dev}_{p_{n}}^{\ast
}(H)$ & $\frac{\widehat{dev}_{p_{n}}^{\ast}(H)}{dev_{p_{n}}}$&MSE   \\
			\hline

$(1000,3,50)$ & $0.95$ & $55.5700$ & $3.1169(5.0227)$ & $29.6736$ \\ 
$(1000,3,30)$ & $0.97$ & $47.6662$ & $2.6736(5.0535)$ & $28.3064$ \\ 
$(1000,3,10)$ & $0.99$ & $27.8433$ & $1.5617(1.9913)$ & $4.2758$ \\ 
$(4000,3,200)$ & $0.95$ & $68.9901$ & $3.8697(4.2013)$ & $25.8658$ \\ 
$(4000,3,120)$ & $0.97$ & $47.4150$ & $2.6595(2.6586)$ & $9.8148$ \\ 
$(4000,3,40)$ & $0.99$ & $30.5550$ & $1.7138(1.7233)$ & $3.4762$ \\ 
$(10000,3,500)$ & $0.95$ & $68.1895$ & $3.8248(2.6942)$ & $15.2304$ \\ 
$(10000,3,300)$ & $0.97$ & $43.3922$ & $2.4339(2.6096)$ & $8.8593$ \\ 
$(10000,3,100)$ & $0.99$ & $29.1761$ & $1.6365(1.4077)$ & $2.3848$ \\ 
$(1000,5,50)$ & $0.95$ & $10.4935$ & $2.7901(2.8125)$ & $11.1067$ \\ 
$(1000,5,30)$ & $0.97$ & $8.1097$ & $2.1563(1.8325)$ & $4.6919$ \\ 
$(1000,5,10)$ & $0.99$ & $6.5000$ & $1.7283(2.4544)$ & $6.5484$ \\ 
$(4000,5,200)$ & $0.95$ & $8.6064$ & $2.2884(0.5868)$ & $2.0038$ \\ 
$(4000,5,120)$ & $0.97$ & $6.7899$ & $1.8054(0.4543)$ & $0.8548$ \\ 
$(4000,5,40)$ & $0.99$ & $5.2311$ & $1.3909(0.4795)$ & $0.3825$ \\ 
$(10000,5,500)$ & $0.95$ & $8.4539$ & $2.2478(0.3367)$ & $1.6703$ \\ 
$(10000,5,300)$ & $0.97$ & $6.6487$ & $1.7678(0.2740)$ & $0.6646$ \\ 
$(10000,5,100)$ & $0.99$ & $5.0348$ & $1.3387(0.2510)$ & $0.1776$ \\ 
			\hline
			
\end{tabular}
} \caption{{\protect \small Based on 1000 samples of size $n$ of
the Pareto($\alpha,\theta$) distribution, the values of $\widehat{dev}_{p_n}^{\ast}(H)$, and the
sample means, sample standard deviations and MSEs of  $\frac{\widehat{dev}_{p_{n}}^{\ast}(H)}{dev_{p_{n}}}$ with $p_n=0.9996$
are presented for various thresholds $k$. }}%
\label{table3}%
\end{table}
	
\begin{table}[ptb]
\centering
\setlength{\tabcolsep}{3mm}{\
\begin{tabular}
[c]{ccccc}\hline
			$(n,\alpha,k)$ & $\tau_{n}$ & $\widehat{dev}_{p_{n}}^{\ast}(H)$ & $\frac{\widehat{dev}_{p_{n}}^{\ast}(H)}{dev_{p_{n}}}$&MSE   \\
			\hline
$(1000,3,50)$ & $0.95$ & $44.2138$ & $2.2888(2.4963)$ & $7.8858$ \\ 
$(1000,3,30)$ & $0.97$ & $38.0555$ & $1.9700(3.4765)$ & $13.0143$ \\ 
$(1000,3,10)$ & $0.99$ & $27.4548$ & $1.4213(1.7322)$ & $3.1745$ \\ 
$(4000,3,200)$ & $0.95$ & $36.4550$ & $1.8872(0.7870)$ & $1.4058$ \\ 
$(4000,3,120)$ & $0.97$ & $28.8701$ & $1.4945(0.5949)$ & $0.5981$ \\ 
$(4000,3,40)$ & $0.99$ & $25.5117$ & $1.3207(0.9225)$ & $0.9530$ \\ 
$(10000,3,500)$ & $0.95$ & $34.5258$ & $1.7873(0.3521$ & $0.7437$ \\ 
$(10000,3,300)$ & $0.97$ & $27.3741$ & $1.4171(0.3040)$ & $0.2663$ \\ 
$(10000,3,100)$ & $0.99$ & $22.6406$ & $1.1720(0.3505)$ & $0.1523$ \\ 
$(1000,5,50)$ & $0.95$ & $13.1665$ & $1.8139(0.8570)$ & $1.3962$ \\ 
$(1000,5,30)$ & $0.97$ & $10.8009$ & $1.4880(0.6133)$ & $0.6140$ \\ 
$(1000,5,10)$ & $0.99$ & $9.3795$ & $1.2922(0.8419)$ & $0.7935$ \\ 
$(4000,5,200)$ & $0.95$ & $12.2619$ & $1.6893(0.2835)$ & $0.5554$ \\ 
$(4000,5,120)$ & $0.97$ & $10.0120$ & $1.3793(0.2445)$ & $0.2036$ \\ 
$(4000,5,40)$ & $0.99$ & $8.2725$ & $1.1397(0.2521)$ & $0.0830$ \\ 
$(10000,5,500)$ & $0.95$ & $12.1204$ & $1.6698(0.1779)$ & $0.4803$ \\ 
$(10000,5,300)$ & $0.97$ & $9.8982$ & $1.3637(0.1509)$ & $0.1550$ \\ 
$(10000,5,100)$ & $0.99$ & $8.1310$ & $1.1202(0.1483)$ & $0.0364$ \\ 
			\hline
			
\end{tabular}
} \caption{{\protect \small Based on 1000 samples of size $n$ of
the Student's $t_\alpha$-distribution, the values of $\widehat{dev}_{p_n}^{\ast}(H)$, and the
sample means, sample standard deviations and MSEs of  $\frac{\widehat{dev}_{p_{n}}^{\ast}(H)}{dev_{p_{n}}}$ with $p_n=0.9996$
are presented for various thresholds $k$. }}%
\label{table4}%
\end{table}

{
\subsection{Serial dependence}
In this subsection we examine the performance of the estimators when the
sample is a $\beta$-mixing process. Here we consider the GARCH(1,1) process
defined as
\begin{equation}
\left \{
\begin{array}
[c]{l}%
X_{t}=\sigma_{t}\varepsilon_{t},\\
\sigma_{t}^{2}=\alpha_{0}+\alpha_{1}X_{t-1}^{2}+\beta_{0}\sigma_{t-1}^{2},
\end{array}
\right.  \label{garch}%
\end{equation}
where $\varepsilon_{t}$ are i.i.d. innovations with zero mean and unit
variance. The stationary GARCH(1,1) series satisfies the $\beta$-mixing
condition and the regularity conditions (C$_{R}$); see for example
\cite{drees2000weighted}. Following the modeling for the financial data used
in \cite{de2016adapting}
and \cite{daouia2018estimation}, we assume the innovations $\varepsilon_{t}$ being the
standardized Student's $t$-distribution. The parameters are set as follows:
the degree of freedom of the Student's $t$-distribution is $6.54$ and
$\alpha_{0}=0.0181$, $\alpha_{1}=0.1476$, $\beta_{0}=0.8497$. This is to
be consistent with those estimated with the real data in Section \ref{real}. The true values
of the deviatile are computed by the Monte Carlo method based on $1000$
samples of GARCH(1,1) with size of $10^{6}$ using the following
estimator
\[
\widetilde{dev}_{\tau_{n}}=\left(  \frac{1}{n}\sum_{i=1}^{n}\frac{\tau_{n}%
}{1-\tau_{n}}\left(  X_{i}-\widetilde{e}_{\tau_{n}}\right)  _{+}^{2}+\left(
X_{i}-\widetilde{e}_{\tau_{n}}\right)  _{-}^{2}\right)  ^{1/2},
\]
where $\widetilde{e}_{\tau_{n}}$ is obtained by using the function \texttt{expectile} from the R package \texttt{expectreg}. 
Both the intermediate and extreme level estimators $\widehat{dev}_{\tau_{n}%
}(H)$ and $\widehat{dev}_{p_{n}}^{\ast}(H)$ are computed based on 1000 random
samples with sample size of $n=1000$ and $5000$ generated from the GARCH(1,1)
process. The sample size of $5000$ is comparable with that of the real data
study in Section \ref{real}. Similar as before, $\tau_{n}=1-k/n$. 

In Table \ref{tablenewadd_1}, we report the true values of $dev_{\tau_{n}}$,
and the sample means, sample standard deviations and MSEs of $\frac
{\widehat{dev}_{\tau_{n}}(H)}{dev_{\tau_{n}}}$. We can see that the
performance is greatly influenced by the selection of $k$. It is expected that
the MSEs are larger than those for the independent data in Table \ref{table2}
as the serial dependence in data increases the variance of the limiting
distribution.

\begin{table}[ptb]
	\centering
	\setlength{\tabcolsep}{3mm}{\
		\begin{tabular}
			[c]{ccccc}\hline
			$(n,k)$ & $\tau_{n}$ &${dev}_{\tau_n}$&$\frac{\widehat{dev}_{\tau_n}(H)}{dev_{\tau_n}}$ & MSE\\
			\hline
			$(1000,50)$ & $0.95$ & $5.0938$ & $0.9221(1.4048)$ & $1.9771$ \\ 
			$(1000,30)$ & $0.97$ & $5.8360$ & $0.9451(1.3497)$ & $1.8226$ \\ 
			$(1000,20)$ & $0.98$ & $6.8071$ & $0.9230(1.5435)$ & $2.3858$ \\ 
			$(5000,150)$ & $0.97$ & $5.8360$ & $1.1301(0.9934)$ & $1.0026$ \\ 
			$(5000,100)$ & $0.98$ & $6.8071$ & $1.0343(1.1510)$ & $1.3245$ \\ 
			$(5000,50)$ & $0.99$ & $9.4925$ & $0.9973(1.6800)$ & $2.8192$ \\ 
		   \hline
			
		\end{tabular}
	} \caption{{\protect \small Based on 1000 samples of size $n$ of
			the GARCH(1,1),the true values of $dev_{\tau_{n}}$, and the
			sample means, sample standard deviations and MSEs of  $\frac{\widehat{dev}_{\tau_{n}}(H)}{dev_{\tau_{n}}}$
			are presented for various thresholds $k$.  }}%
	\label{tablenewadd_1}%
\end{table}

For the extreme level estimator, we set the extreme level $p_{n}$ at $0.9996$.
The true value of $dev_{0.9996}$ is 27.7818. The values of $\widehat
{dev}_{p_{n}}^{\ast}(H)$, sample means, sample standard deviations and MSEs of
$\frac{\widehat{dev}_{\tau_{n}}(H)}{dev_{\tau_{n}}}$ are reported in Table
\ref{tablenewadd_2}. We can see that the performance is greatly influenced by
the selection of $k$. As the variance is increased for the limiting
distribution in the dependent data case, it is of great importance to develop
a scheme to optimal select $k$ so that the bias and the variance are balanced
for the estimation of the risk measure at the extreme level, which is left for
the future research.
\begin{table}[ptb]
	\centering
	\setlength{\tabcolsep}{3mm}{\
		\begin{tabular}
			[c]{ccccc}\hline
				$(n,k)$& $\tau_{n}$ & $\widehat{dev}_{p_{n}}^{\ast}(H)$ & $\frac{\widehat{dev}_{p_{n}}^{\ast}(H)}{dev_{p_{n}}}$&MSE   \\
				\hline
			$(1000,50)$ & $0.95$ & $38.6006$ & $1.3894(2.6061)$ & $6.9349$ \\ 
			$(1000,30)$ & $0.97$ & $32.9287$ & $1.1852(2.3072)$ & $5.3519$ \\ 
			$(1000,20)$ & $0.98$ & $21.3725$ & $0.7693(2.6373)$ & $7.0013$ \\ 
			$(5000,150)$ & $0.97$ & $53.5653$ & $1.9280(2.7564)$ & $8.4490$ \\ 
			$(5000,100)$ & $0.98$ & $45.2008$ & $1.6269(3.7811)$ & $14.6727$ \\ 
			$(5000,50)$ & $0.99$ & $30.4863$ & $1.0973(2.4425)$ & $5.9685$ \\ 
			\hline
			
		\end{tabular}
	} \caption{{\protect \small Based on 1000 samples of size $n$ of
			the GARCH(1,1), the values of $\widehat{dev}_{p_n}^{\ast}(H)$, and the
			sample means, sample standard deviations and MSEs of  $\frac{\widehat{dev}_{p_{n}}^{\ast}(H)}{dev_{p_{n}}}$ with $p_n=0.9996$
			are presented for various thresholds $k$. }}%
	\label{tablenewadd_2}%
\end{table}
}

\section{Real Data Applications} \label{real}

In this section, we apply the proposed intermediate and extreme level
estimators of the deviatile to the real stock data under both independent and serial dependence cases.  Consider the daily
log-losses (i.e., negative log-return) of S\&P 500 index from January 1, 2000,
to December 31, 2019. There are 5030 observations in total, 2514 observations for the period from 2000 to 2009 and 2516 for the period from 2010 to 2019.

We show the daily log-losses of the S\&P 500 index in Figure \ref{label_for_cross_ref_ts2000} for the whole period (2000-2019) and focus
on the financial crisis period (2007-2009) in Figure \ref{label_for_cross_ref_ts2009}. One can see frequent
large losses during the financial crisis period. 

	\begin{figure}[ptb]
		\centering
		\subfigure[2000-2019
		]{
			\includegraphics[width=2.7in]{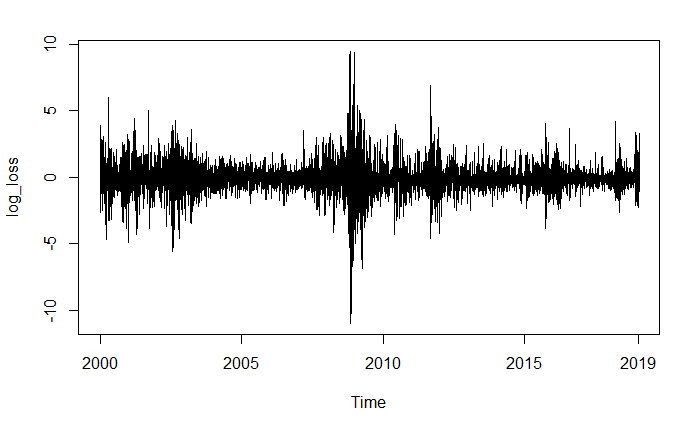}
			\label{label_for_cross_ref_ts2000}
		}
		\subfigure[ 2007-2009
		]{
			\includegraphics[width=2.7in]{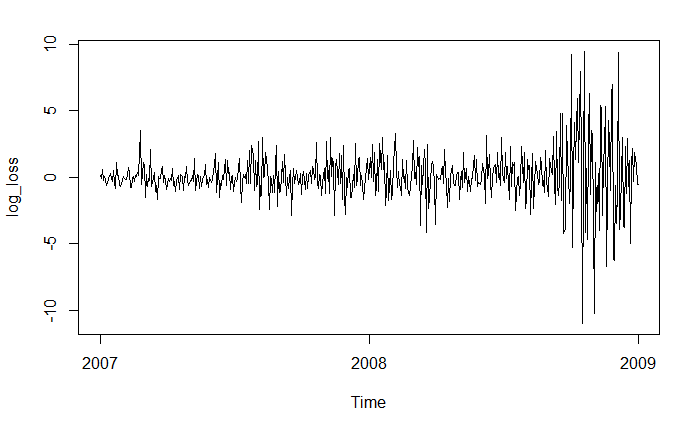}
			\label{label_for_cross_ref_ts2009}
		}
		\quad    
		\caption{Daily log-losses of the S$\&$P 500 index.}
		\label{fig.ts}
	\end{figure}

{First, we ignore the serial dependence exhibited in the daily log-losses but treat the data to be independent and identically distributed.} We compare the deviatile with VaR, ES and expectile at confidence levels of $0.95$, $0.99$, and $0.999$. When
calculating the risk measures at the level of $0.95$ and $0.99$, we apply the
intermediate-level estimators. That is, $\widehat{dev}_{\tau}(H)$ for the deviatile, $\widehat{q}_{\tau}=X_{n-\left \lfloor n\left(  1-\tau \right)
\right \rfloor ,n}$ for VaR,
\[
\widehat{ES}_{\tau}(H)=\frac{1}{1-\widehat{\gamma}_{H}}\widehat{q}_{\tau}%
\]
for ES, and
\[
\widehat{e}_{\tau}(H)=\left(  \widehat{\gamma}_{H}^{-1}-1\right)
^{-\widehat{\gamma}_{H}}\widehat{q}_{\tau}%
\]
for the expectile. When calculating the level at $p=0.999$, we apply the extreme-level estimators by using extrapolation and setting $\tau=0.95$. That is,
$\widehat{dev}_{p}^{\ast}(H)$ for the deviatile, $\widehat{q}_{p}%
^{\ast}=\left(  \frac{1-p}{1-\tau}\right)  ^{-\widehat{\gamma}}\widehat
{q}_{\tau}$ for VaR,
\[
\widehat{ES}_{\tau}^{\ast}(H)=\left(  \frac{1-p}{1-{\tau}}\right)
^{-\widehat{\gamma}_{H}}\widehat{ES}_{\tau}(H)
\]
for ES, and
\[
\widehat{e}_{\tau}^{\ast}(H)=\left(  \frac{1-p}{1-{\tau}}\right)
^{-\widehat{\gamma}_{H}}\widehat{e}_{\tau}(H)
\]
for the expectile. \\

In Table \ref{table5}, we compute the four risk measures for the period
from 2000 to  2009. The estimated tail index $\widehat{\gamma}_{H}$ for this period is $0.3885$. Table \ref{table6} reports the risk measures for
the period from 2010 to 2019, and the estimated tail index $\widehat{\gamma}_{H}$ for this period is $0.2391$.  All the risk measures show that  the
financial market was more stable in the period from 2010 to 2019
(Table \ref{table6}) than in that from 2000 to 2009 (Table
\ref{table5}). We also observe that the deviatile is larger than VaR
and the expectile but is smaller than ES. Both ES and the deviatile better reflect the substantial risks during the financial crisis. {This can be understood from the point of view that both ES and deviatile are deviation measures in the definition of risk quadrangle and thus they are more sensitive to capture large oscillations of data.}

\begin{table}[ptb]
\centering
\setlength{\tabcolsep}{3mm}{\
\begin{tabular}
[c]{ccccc}\hline
			$\tau$ &$\widehat{q}_{\tau}$ & $\widehat{e}_{\tau}(H)$  &$\widehat{ES}_{\tau}(H)$ & $\widehat{dev}_{\tau}(H)$   \\
		
			\hline
			0.95& 2.1774 &1.8257 & 3.5611& 3.8673\\
	
			0.99&3.9189&3.2859&6.4093 &6.9603\\
		
			0.999&9.9563 &8.3482&16.2835& 17.6833\\
			\hline
			\end{tabular}
} \caption{{\protect \small The estimated VaR, expectile, ES and
    deviatile of the daily log-losses of the S\&P 500 index from 2000 to 2009.} }%
\label{table5}%
\end{table}

\begin{table}[ptb]
\centering
\setlength{\tabcolsep}{3mm}{\
\begin{tabular}
[c]{ccccc}\hline
			$\tau$ &$\widehat{q}_{\tau}$ & $\widehat{e}_{\tau}(H)$  &$\widehat{ES}_{\tau}(H)$ & $\widehat{dev}_{\tau}(H)$   \\
		
			\hline
		0.95& 1.5536 &1.1780 & 2.0419& 1.6309\\
		
			0.99& 2.8461& 2.1580 & 3.7407& 2.9877\\
		
			0.999& 3.9597 &3.0023&  5.2043& 4.1567\\
			\hline
			\end{tabular}
} \caption{{\protect \small The estimated VaR, expectile, ES and
    deviatile of the daily log-losses of the S\&P 500 index from 2010 to 2019.}}%
\label{table6}%
\end{table}
		\begin{figure}[ptb]
		\centering
		\subfigure[$\widehat{dev}_{0.95}(H)$
		]{
			\includegraphics[width=2.7in]{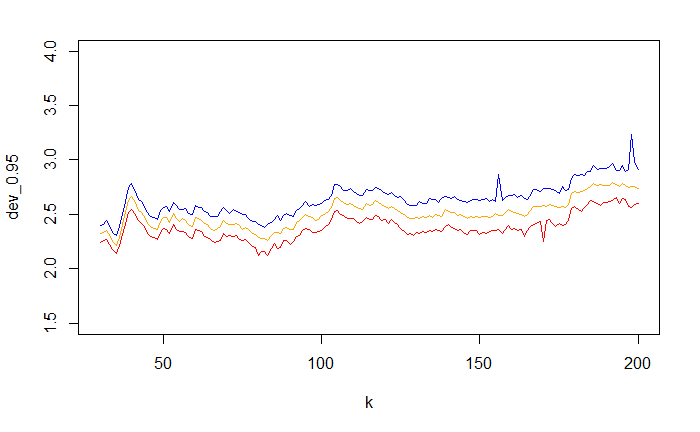}
			\label{bootstrap0.95}
		}
		\subfigure[ $\widehat{dev}^*_{0.999}(H)$
		]{
			\includegraphics[width=2.7in]{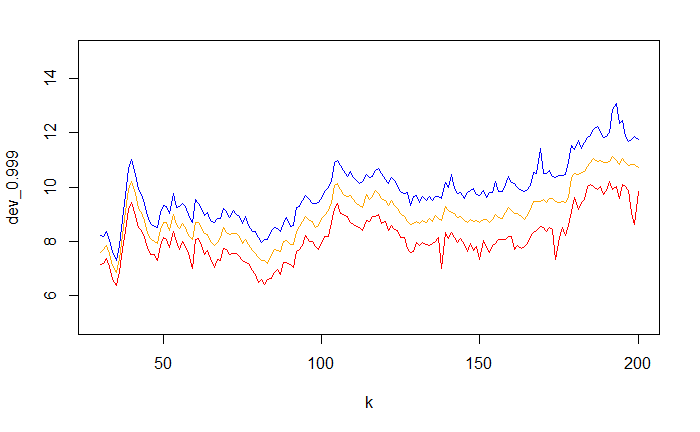}
			\label{bootstrap0.999}
		}
		\quad    
		\caption{$\widehat{dev}_{0.95}(H)$ and $\widehat{dev}^*_{0.999}(H)$ for the S\& P 500 index daily log-losses are plotted in yellow curves in the left and right panel, respectively. The lower and upper bounds of the corresponding 0.95 confidence interval are plotted in blue and red curves. }
		\label{fig.bootstrap}
	\end{figure}
{Chapter 3 of \cite{mcneil2015quantitative} summarized the stylized features of financial return series including heavy
tailed returns, volatility clustering and serial dependence. To model these
features of daily log-losses of S\&P 500 index, we following the same model
used in \cite{de2016adapting} and \cite{daouia2018estimation} that we fit the data with a GARCH(1,1) process defined in (\ref{garch})
with standardized Student's $t$-distribution. The estimated degree of freedom
of the Student's $t$-distribution is $\widehat{\nu}=6.54$, and $\widehat
{\alpha}_{0}=0.0181$, $\widehat{\alpha}_{1}=0.1476$, $\widehat{\beta}%
_{0}=0.8497$. We estimated the deviatile at $95\%$ level with intermediate
level estimator $\widehat{dev}_{0.95}(H)$ and estimate the $99.9\%$ level with
the extreme level estimator $\widehat{dev}_{0.999}^{\ast}(H)$ with various
choices of $k=30,31,...,200$ in the estimation of the Hill estimator. We also
construct the $0.95$ confidence interval for these estimators. Since there is
no explicit formula for calculating the asymptotic variance under the GARCH
model, we follow the similar procedure in \cite{de2016adapting} to estimate the asymptotic
variance with simulation. To be more specific, we use the block bootstrapping
method by applying \texttt{tsboot} in the package \texttt{boot} in R with the
block lengths chosen to have a geometric distribution (\texttt{sim} %
$=$ \texttt{geom}) with mean $200$. We obtain 100 bootstrapped estimates for
each estimator by repeating the bootstrapping procedure 100 times. The sample
variance is then calculated as an estimate of the asymptotic variance of the
underlying estimator. The estimates of deviatile together with the 0.95
confidence intervals are plotted against $k$ in Figure \ref{fig.bootstrap}.
Both estimators stay stable over different $k$. The intermediate estimator has
a narrower confidence interval overall. }
	
	\section{Conclusion} \label{conclusion}
	{In this paper, to better capture extreme risks, the adjusted
        standard-deviatile is defined by  making adjustments to the variantile.} By investigating the first- and
        second-order asymptotic expansions of the deviatile, we study
        efficient estimation methods for the deviatile at both
        intermediate and extreme levels.   The asymptotic normality is
        proved for both estimators when the sample is assumed to be i.i.d. and is exhibiting serial dependence (to be more specific, satisfying the $\beta$-mixing condition), respectively. Through simulation studies, we
        show that the proposed estimators are easy to implement and
        have good performance. Lastly, we compare the deviatile with
        other commonly used risk measures in the real financial market
        and find that the deviatile is capable of capturing extreme risks. 
	
\section*{Appendix}	
{In the following proofs,   for a sequence of random variables $Y_{n}$, $n\in\N$,  the notation
$Y_{n}=o_{p}(1)$ means that for any $\varepsilon>0$, $\lim_{n\rightarrow \infty}%
\Pr(|Y_{n}|>\varepsilon)=0$.}

The asymptotic expansions of the deviatile rely on
those of the expectile, which are investigated in \cite{mao2015asymptotic} and
\cite{daouia2018estimation}. We present it in the following proposition.

\begin{proposition}
\label{p1}Assume that $U\in2\mathrm{RV}_{\gamma,\rho}$ with $0<\gamma<1$, $\rho \leq0$
and auxiliary function $A(t)$. Then as $\tau \uparrow1$, 
\[
\frac{\overline{F}\left(  e_{\tau}\right)  }{1-\tau}=\left(  \gamma
^{-1}-1\right)  (1+\eta(\tau)),\qquad \text{and}\qquad \frac{e_{\tau}}{q_{\tau}%
}=\left(  \gamma^{-1}-1\right)  ^{-\gamma}(1+r(\tau)),
\]
where%
\[
\eta(\tau)=-\frac{\left(  \gamma^{-1}-1\right)  ^{\gamma}}{q_{\tau}}%
({E}(X)+o(1))-\frac{\left(  \gamma^{-1}-1\right)  ^{-\rho}}{\gamma
(1-\rho-\gamma)}A\left(  (1-\tau)^{-1}\right)  (1+o(1)),
\]
and%
\[
r(\tau)=\frac{\gamma \left(  \gamma^{-1}-1\right)  ^{\gamma}}{q_{\tau}}%
({E}(X)+o(1))+\left(  \frac{\left(  \gamma^{-1}-1\right)  ^{-\rho}}%
{1-\rho-\gamma}+\frac{\left(  \gamma^{-1}-1\right)  ^{-\rho}-1}{\rho
}+o(1)\right)  A\left(  (1-\tau)^{-1}\right)  .
\]

\end{proposition}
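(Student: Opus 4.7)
The plan is to prove the proposition from scratch (rather than simply quoting \cite{mao2015asymptotic} and \cite{daouia2018estimation}) by working directly from the first-order optimality condition for the expectile and then applying a second-order version of Karamata's theorem. Differentiating $E[L_{\tau}(X-x)]$ and setting the derivative to zero gives the characterization $\tau E[(X-e_\tau)_+] = (1-\tau) E[(X-e_\tau)_-]$. Since $\gamma<1$ implies $E|X|<\infty$, I would combine this with the identity $E[(X-e_\tau)_+]-E[(X-e_\tau)_-]=E[X]-e_\tau$ to obtain the clean relation
\[
(2\tau-1)\, E[(X-e_\tau)_+] = (1-\tau)(e_\tau - E[X]).
\]
This is the backbone of the whole argument: the left side is a tail integral controlled by $\overline{F}$, and the right side exposes both the calibration level $1-\tau$ and the mean correction that will drive one of the two second-order contributions.

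Next I would write $E[(X-e_\tau)_+]=\int_{e_\tau}^{\infty}\overline{F}(y)\,\mathrm{d}y$ and apply Karamata's theorem in its 2RV refinement (for instance Theorem~B.2.13 of \cite{de2007extreme}), using that $\overline{F}\in 2\mathrm{RV}_{-1/\gamma,\rho/\gamma}$ with auxiliary function $\gamma^{-2}A(1/\overline{F}(\cdot))$ by the equivalence stated in Section~\ref{Prlm}. This yields a two-term expansion
\[
\int_{e_\tau}^{\infty}\overline{F}(y)\,\mathrm{d}y = \frac{\gamma\, e_\tau\, \overline{F}(e_\tau)}{1-\gamma}\Bigl(1 + \frac{c_{\gamma,\rho}}{\gamma(1-\rho-\gamma)}\,A\bigl(1/\overline{F}(e_\tau)\bigr)(1+o(1))\Bigr),
\]
with an explicit constant $c_{\gamma,\rho}$. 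Dividing the backbone identity by $e_\tau$, expanding $(2\tau-1)^{-1}=1+O(1-\tau)$, and noting that the $O(1-\tau)$ contribution is dominated by the two genuine second-order terms $E[X]/e_\tau$ and $A\bigl(1/\overline{F}(e_\tau)\bigr)$, I get the first stated expansion
\[
\frac{\overline{F}(e_\tau)}{1-\tau} = (\gamma^{-1}-1)(1+\eta(\tau)).
\]
The coefficients of $\eta(\tau)$ follow by substituting $e_\tau\sim(\gamma^{-1}-1)^{-\gamma} q_\tau$ to translate $E[X]/e_\tau$ into a $q_\tau^{-1}$ contribution, and by using $\overline{F}(e_\tau)\sim(1-\tau)(\gamma^{-1}-1)$ to translate $A(1/\overline{F}(e_\tau))$ into $A((1-\tau)^{-1})$ via the uniform 2RV property of $A$.

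For the second formula, I would invert: since $e_\tau = U(1/\overline{F}(e_\tau))$ and $q_\tau = U(1/(1-\tau))$, writing $s_\tau = 1/\overline{F}(e_\tau)$ and $t_\tau = 1/(1-\tau)$ gives $s_\tau/t_\tau \to \gamma/(1-\gamma) = (\gamma^{-1}-1)^{-1}$ by the first expansion just obtained. The 2RV property of $U$ (Theorem~2.3.9 of \cite{de2007extreme}) produces a two-term expansion for $U(s_\tau)/U(t_\tau)$; reading off the leading term yields $(\gamma^{-1}-1)^{-\gamma}$, and substituting the explicit $\eta(\tau)$ into the correction term produces the stated $r(\tau)$ with its $q_\tau^{-1}$ and $A((1-\tau)^{-1})$ pieces.

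The main obstacle is the bookkeeping of the two simultaneous second-order corrections. One correction comes from the mean in $e_\tau-E[X]$ and behaves like $q_\tau^{-1}$; the other comes from the Karamata remainder and behaves like $A((1-\tau)^{-1})$. These are of genuinely different orders for most distributions, and both the $(2\tau-1)^{-1}$ factor and the translation from $A(1/\overline{F}(e_\tau))$ to $A((1-\tau)^{-1})$ (which itself uses the first expansion recursively) introduce further sub-leading terms that must be verified to be absorbed into the $o(1)$'s rather than contributing at the level of $\eta(\tau)$ or $r(\tau)$. Uniform Potter-type bounds for 2RV functions and a careful matching of the constants $\xi_1,\xi_2$ via the formula for $c_{\gamma,\rho}$ are what make the exact coefficients displayed in the statement come out correctly; the cited references of \cite{mao2015asymptotic} and \cite{daouia2018estimation} perform precisely this computation and may be invoked directly if one prefers to skip the algebra.
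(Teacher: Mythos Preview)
Your sketch is correct and is precisely the argument carried out in \cite{mao2015asymptotic} and \cite{daouia2018estimation}; the present paper does not reprove Proposition~\ref{p1} at all but simply quotes it from those references, so your proposal supplies what the paper deliberately omits. One small point worth making explicit in your write-up is why the $O(1-\tau)$ arising from $(2\tau-1)^{-1}$ is negligible: since $q_\tau^{-1}$ is regularly varying of index $\gamma\in(0,1)$ in $1-\tau$, the term $1-\tau=o(q_\tau^{-1})$ always, so it is absorbed into the $o(1)$ attached to the $q_\tau^{-1}$ contribution regardless of the size of $A((1-\tau)^{-1})$.
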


\bigskip
Now we are ready to prove the main results.
\begin{proof}[Proof of Theorem \ref{1st}]
Note that
$
E[(X-e_{\tau})^{2}]=E[((X-e_{\tau})_{-})^{2}]+E[((X-e_{\tau})_{+})^{2}].
$
We rewrite the deviatile as follows:
\begin{equation}
{dev}_{\tau}^{2}(X)=\frac{2\tau-1}{1-\tau}{E}\left[  (X-e_{\tau})_{+}%
^{2}\right]  +{E}\left[  (X-e_{\tau})^{2}\right]  .\label{rw}%
\end{equation}
First, we consider the expansion of ${E}\left[  (X-e_{\tau})_{+}^{2}\right]  $.
By integration by parts and a change of variable, we have
\[
{E}\left[  (X-e_{\tau})_{+}^{2}\right]  =2e^2_{\tau}\int_{1}^{\infty}%
\overline{F}(e_{\tau}x)(x-1)dx.
\]
Note that as $\tau \rightarrow1$, $e_{\tau}\rightarrow \infty$. By Potter's
bounds, (see, e.g., Proposition B.1.9(5) of \cite{de2007extreme}), for any
$\varepsilon,\delta>0$, there exists $t_{0}>0$ such that when $e_{\tau}>t_{0}$ and $x\geq1$, we have  
\[
\frac{\overline{F}(e_{\tau}x)}{\overline{F}(e_{\tau})}\leq(1+\varepsilon
)x^{-\frac{1}{\gamma}+\delta}.
\]
Then, by the dominated convergence theorem, for $0<\gamma<1/2$, we have
\begin{equation}
\lim_{\tau \rightarrow1}\frac{{E}\left[  (X-e_{\tau})_{+}^{2}\right]  }%
{e^2_{\tau}\overline{F}(e_{\tau})}=2\int_{1}^{\infty}x^{-\frac{1}{\gamma}%
}(x-1)dx=\frac{2\gamma^{2}}{\left(  1-2\gamma \right)  (1-\gamma)}.\nonumber
\end{equation}
By Proposition \ref{p1}, we can further expand ${E}\left[
(X-e_{\tau})_{+}^{2}\right]  $ as
\[
\lim_{\tau \rightarrow1}\frac{{E}(X-e_{\tau})_{+}^{2}}{(1-\tau)q_{\tau}^{2}%
}=(\gamma^{-1}-1)^{-2\gamma}\frac{2\gamma}{\left(  1-2\gamma \right)  }.
\]
Now, we turn to $E(X-e_{\tau})^{2}$. Note the rewriting
\begin{align*}
E\left[  (X-e_{\tau})^{2}\right]    & ={E}\left[  X^{2}\right]  -2e_{\tau}%
{E}\left[  X\right]  +e_{\tau}^{2}\\
& \sim e_{\tau}^{2}.
\end{align*}
By Proposition \ref{p1}, we have
\[
E[(X-e_{\tau})^{2}]\sim(\gamma^{-1}-1)^{-2\gamma}q_{\tau}^{2}.
\]
Combining the above results, we have
\begin{equation}
{dev}_{\tau}^{2}(X)\sim \frac{(\gamma^{-1}-1)^{-2\gamma}}{1-2\gamma}q_{\tau
}^{2}.\label{1stavar}%
\end{equation}
Now, by taking the square root on both sides of (\ref{1stavar}), the
first-order expansions of ${dev}_{\tau}(X)$ follow. 
\end{proof}	

\bigskip
 \bigskip

\begin{proof}[Proof of Theorem \ref{2ndexp}]
By the same rewriting of the deviatile in \eqref{rw}, we first analyze
$E[(X-e_{\tau})_{+}^{2}]$. 
Note
that
\begin{align}
E[(X-e_{\tau})_{+}^{2}] &  =2e_{\tau}^{2}\int_{1}^{\infty}\overline{F}%
(e_{\tau}x)(x-1)dx\nonumber \\
&  =2\overline{F}(e_{\tau})e_{\tau}^{2}\int_{1}^{\infty}\left(  x^{-\frac
{1}{\gamma}}+\frac{\overline{F}(e_{\tau}x)}{\overline{F}(e_{\tau})}%
-x^{-\frac{1}{\gamma}}\right)  (x-1)dx,\nonumber \\
&  =2\overline{F}(e_{\tau})e_{\tau}^{2}\left(  \frac{\gamma^{2}}%
{(1-2\gamma)(1-\gamma)}+\int_{1}^{\infty}\left(  \frac{\overline{F}(xe_{\tau
})}{\overline{F}(e_{\tau})}-x^{-\frac{1}{\gamma}}\right)  (x-1)dx\right)
\nonumber \\
&  =2c_{1}\overline{F}(e_{\tau})e_{\tau}^{2}\left(  1+\frac{1}{c_{1}}\int
_{1}^{\infty}\left(  \frac{\overline{F}(xe_{\tau})}{\overline{F}(e_{\tau}%
)}-x^{-\frac{1}{\gamma}}\right)  (x-1)dx\right)  \label{2nd}%
\end{align}
where $c_{1}=\frac{\gamma^{2}}{(1-2\gamma)(1-\gamma)}$. Now, we focus on the
integral term in (\ref{2nd}). Since $U\in2\mathrm{RV}_{\gamma,\rho}$ with
auxiliary function $A(\cdot)$, by Theorem 2.3.9 of \cite{de2007extreme}, for
any $\varepsilon,\delta>0$, there exists $t_{0}>0$ such that for $e_{\tau
}>t_{0}$ and $x\geq1$, we have
\[
\left \vert \frac{\frac{\overline{F}(xe_{\tau})}{\overline{F}(e_{\tau}%
)}-x^{-\frac{1}{\gamma}}}{A\left(  \frac{1}{\overline{F}(e_{\tau})}\right)
}-x^{-1/\gamma}\frac{x^{\rho/\gamma}-1}{\gamma \rho}\right \vert \leq \varepsilon
x^{-\frac{1}{\gamma}+\frac{\rho}{\gamma}+\delta}.
\]
Then, for $-\frac{1}{\gamma}+\frac{\rho}{\gamma}+2<0$, or equivalently
$0<\gamma<\frac{1-\rho}{2}$ which is ensured by $0<\gamma<\frac{1}{2}$, by the dominated convergence theorem, we have
\begin{align*}
\lim_{\tau \rightarrow1}\int_{1}^{\infty}\frac{\frac{\overline{F}(xe_{\tau}%
)}{\overline{F}(e_{\tau})}-x^{-\frac{1}{\gamma}}}{A\left(  \frac{1}%
{\overline{F}(e_{\tau})}\right)  }(x-1)dx &  =\int_{1}^{\infty}x^{-\frac
{1}{\gamma}}\frac{x^{\frac{\rho}{\gamma}}-1}{\gamma \rho}(x-1)dx\\
&  =\frac{(2-\rho)\gamma-3\gamma^{2}}{\left(  1-\rho-2\gamma \right)  \left(
1-\rho-\gamma \right)  \left(  1-2\gamma \right)  \left(  1-\gamma \right)  }.
\end{align*}
By Proposition \ref{p1} and $|A|\in \mathrm{RV}_{\rho}$, we have $A\left(
\frac{1}{\overline{F}(e_{\tau})}\right)  \sim(\gamma^{-1}-1)^{-\rho}A\left(
\frac{1}{1-\tau}\right)  $. Thus, as $\tau \rightarrow1$,
\[
E[(X-e_{\tau})_{+}^{2}]=2c_{1}\overline{F}(e_{\tau})e_{\tau}^{2}\left(
1+c_{2}A\left(  \frac{1}{1-\tau}\right)  (1+o(1))\right)  ,
\]
where $c_{2}=\frac{(2-\rho-3\gamma)\gamma^{1+\rho}}{\left(  1-\rho
-2\gamma \right)  \left(  1-\rho-\gamma \right)  \left(  1-2\gamma \right)
\left(  1-\gamma \right)  ^{1+\rho}c_{1}}$. Furthermore, by Proposition \ref{p1}, we have
\begin{align*}
E[(X-e_{\tau})_{+}^{2}] &  =2c_{1}(1-\tau)\left(  \gamma^{-1}-1\right)
(1+\eta(\tau))q_{\tau}^{2}\left(  \gamma^{-1}-1\right)  ^{-2\gamma}\left(
1+r(\tau)\right)  ^{2}\left(  1+c_{2}A\left(  \frac{1}{1-\tau}\right)
(1+o(1))\right)  \\
&  =2c_{1}(1-\tau)\left(  \gamma^{-1}-1\right)  q_{\tau}^{2}\left(
1+\eta(\tau)+2r(\tau)+c_{2}A\left(  \frac{1}{1-\tau}\right)  (1+o(1))\right)
.
\end{align*}
Now, we turn to $E(X-e_{\tau})^{2}$. By Proposition \ref{p1}, we have
\begin{align*}
E\left[  (X-e_{\tau})^{2}\right]   &  =q_{\tau}^{2}\left(  \gamma
^{-1}-1\right)  ^{-2\gamma}\left(  1+2r(\tau)\right)  -2q_{\tau}\left(
\gamma^{-1}-1\right)  ^{-\gamma}{E}\left[  X\right]  (1+o(1))\\
&  =q_{\tau}^{2}\left(  \gamma^{-1}-1\right)  ^{-2\gamma}\left(
1+2r(\tau)-2q_{\tau}^{-1}\left(  \gamma^{-1}-1\right)  ^{\gamma}{E}\left[
X\right]  (1+o(1))\right)  .
\end{align*}
Now, plugging the above expansions of $E[(X-e_{\tau})_{+}^{2}]$ and $E\left[
(X-e_{\tau})^{2}\right]  $ into relation (\ref{rw}), we obtain the
second-order expansions
\begin{equation}
dev_{\tau}^{2}(X)=\beta_{\gamma}^{2}q_{\tau}^{2}\left(  1+2w(\tau)\right)
.\label{2ndavar}%
\end{equation}
Now, taking the square root on both sides of (\ref{2ndavar}), the desired result follows.
\end{proof}

\bigskip

\begin{proof}[Proof of Theorem \ref{int}]
Based on the definition of the deviatile and the
second-order asymptotic expansions of the deviatile
established in Theorem \ref{2ndexp}, we have 
\begin{align*}
\frac{\widehat{dev}_{\tau_{n}}}{dev_{\tau_{n}}}  &  =\frac{\widehat{\beta
}_{\gamma}\widehat{q}_{\tau_{n}}}{\beta_{\gamma}q_{\tau_{n}}\left(
1+w(\tau_{n})\right)  }\\
&  =\frac{\frac{(\widehat{\gamma}^{-1}-1)^{-\widehat{\gamma}}}{\sqrt
{1-2\widehat{\gamma}}}\widehat{q}_{\tau_{n}}}{\frac{(\gamma^{-1}-1)^{-\gamma}%
}{\sqrt{1-2\gamma}}q_{\tau}\left(  1+w(\tau_{n})\right)  }\\
&  =\frac{(\widehat{\gamma}^{-1}-1)^{-\widehat{\gamma}}}{(\gamma
^{-1}-1)^{-\gamma}}\times \sqrt{\frac{1-2\gamma}{1-2\widehat{\gamma}}}%
\times \frac{\widehat{q}_{\tau_{n}}}{q_{\tau_{n}}}\times \frac{1}{1+w(\tau_{n}%
)}.
\end{align*}
It then follows that
\begin{align*}
\log \frac{\widehat{dev}_{\tau_{n}}}{dev_{\tau_{n}}}  &  =\log \frac
{(\widehat{\gamma}^{-1}-1)^{-\widehat{\gamma}}}{(\gamma^{-1}-1)^{-\gamma}%
}+\frac{1}{2}\log \frac{1-2\gamma}{1-2\widehat{\gamma}}+\log \frac{\widehat
{q}_{\tau_{n}}}{q_{\tau_{n}}}+\log \frac{1}{1+w(\tau_{n})}\\
=  &  \left[  -\widehat{\gamma}\log(\widehat{\gamma}^{-1}-1)+\gamma \log
(\gamma^{-1}-1)\right]  +\frac{1}{2}\left[  \log \left(  1-2\gamma \right)
-\log(1-2\widehat{\gamma})\right] \\
&  +\log \frac{\widehat{q}_{\tau_{n}}}{q_{\tau_{n}}}-\log \left(  1+w(\tau
_{n})\right)  .\\
&  \triangleq I_{1}+I_{2}+I_{3}-I_{4}.
\end{align*}
We next investigate the asymptotic distribution of the above four terms.

For $I_{1}$, note that
\[
\frac{\mathrm{d}}{\mathrm{d}x}(-x\log(x^{-1}-1))=-\log(x^{-1}-1)+\frac{1}
{1-x}.
\]
By the delta method, we have that as $n\rightarrow \infty$
\[
\sqrt{n\left(  1-\tau_{n}\right)  }I_{1}\overset{d}{\longrightarrow}\left(
-\log(\gamma^{-1}-1)+\frac{1}{1-\gamma}\right)  \Gamma.
\]
For $I_{2}$, note that
\[
\frac{\mathrm{d}}{\mathrm{d}x}\left(  \frac{1}{2}\log \left(  1-2x\right)
\right)  =-\frac{1}{1-2x}.
\]
By the delta method, we have that as $n\rightarrow \infty$
\[
\sqrt{n\left(  1-\tau_{n}\right)  }I_{2}\overset{d}{\longrightarrow}\frac
{1}{1-2\gamma}\Gamma.
\]
For $I_{3}$, by the delta method, we obtain that as $n\rightarrow \infty$
\[
\sqrt{n\left(  1-\tau_{n}\right)  }I_{3}\overset{d}{\longrightarrow}\Theta.
\]
For $I_{4}$, note that by $w(\tau_{n})\rightarrow0$ as $\tau_{n}
\rightarrow \infty$, we have
\[
\log \left(  1+w(\tau_{n})\right)  =w(\tau_{n})(1+o(1))=\xi_{1}q_{\tau_{n}}^{-1}
(1+o(1))+\xi_{2}A\left(  (1-\tau_n)^{-1}\right)  (1+o(1)).
\]
$\text{Then by }\sqrt{n\left(  1-\tau_{n}\right)  }q_{\tau_{n}}^{-1}
\rightarrow \lambda_{1}\in \mathbb{R}$ and $\sqrt{n\left(  1-\tau_{n}\right)
}A\left(  \left(  1-\tau_{n}\right)  ^{-1}\right)  \rightarrow \lambda_{2}
\in \mathbb{R}$,$\text{ we have as }n\rightarrow \infty$
\[
\sqrt{n\left(  1-\tau_{n}\right)  }I_{4}\rightarrow \lambda_{1}\xi_{1}
+\lambda_{2}\xi_{2}.
\]
Finally, combining the above results yields the desired result.
\end{proof}
\bigskip

{
\begin{proof}[Proof of Corollary \ref{cor1}]
It suffices to show the relation (\ref{limit}) holds when $\gamma$ is
estimated by the Hill estimator. To make the notation simple, we set
$k=\left \lfloor n\left(  1-\tau_{n}\right)  \right \rfloor $ in this proof. Let
$Y_{1},Y_{2},...$ be i.i.d. random variables with common distribution function
$1-1/y$, $y\geq1$. Then $U(Y_{i})\overset{d}{=}X_{i}$ for $i=1,2,...$.
Following the proof of Theorem 3.2.5 of \cite{de2007extreme},
\begin{align*}
&  \sqrt{k}\left(  \widehat{\gamma}_{H}-\gamma \right)  \overset{d}{=}%
\gamma \sqrt{k}\left(  \frac{1}{k}\sum_{i=0}^{k-1}\log \left(  \frac{Y_{n-i,n}%
}{Y_{n-k,n}}\right)  -1\right)  +\sqrt{k}A\left(  Y_{n-k,n}\right)  \frac
{1}{k}\sum_{i=0}^{k-1}\frac{\left(  \frac{Y_{n-i,n}}{Y_{n-k,n}}\right)
^{\rho}-1}{\rho}\\
&  +o_{p}(1)\sqrt{k}A\left(  Y_{n-k,n}\right)  \frac{1}{k}\sum_{i=0}%
^{k-1}\left(  \frac{Y_{n-i,n}}{Y_{n-k,n}}\right)  ^{\rho+\varepsilon}.
\end{align*}
The second term above converges in probability to $\lambda_{2}/(1-\rho)$ and
the third term vanishes. Thus, to show the limiting relation of (\ref{limit})
is suffices to show
\begin{equation}
\sqrt{k}\left(  \gamma \left(  \frac{1}{k}\sum_{i=0}^{k-1}\log \left(
\frac{Y_{n-i,n}}{Y_{n-k,n}}\right)  -1\right)  ,\frac{U\left(  Y_{n-k,n}%
\right)  }{U\left(  n/k\right)  }-1\right)  \overset{d}{\longrightarrow}(\widetilde{\Gamma
},\Theta).\label{joint}%
\end{equation}
By Lemma 3.2.3 in \cite{de2007extreme}, the limiting distribution
$\widetilde{\Gamma}$ satisfies that
\[
\widetilde{\Gamma}\overset{d}{=}\mathcal{N}\left(  0,\gamma^{2}\right)  ,
\]
and\ by Theorem 2.4.1 in \cite{de2007extreme},
\[
\Theta \overset{d}{=}\mathcal{N}\left(  0,\gamma^{2}\right)  .
\]
By the reasoning of Lemma 3.2.3 in \cite{de2007extreme}, $\left \{
\frac{Y_{n-i,n}}{Y_{n-k,n}}\right \}  _{i=1}^{k-1}$ is an i.i.d. sequence with
the same common distribution as $Y_{1}$ and it is independent of $Y_{n-k,n}$.
Thus $\widetilde{\Gamma}$ and $\Theta$ are independent, and $\Gamma
=\widetilde{\Gamma}+\lambda_{2}/(1-\rho)$ is independent with $\Theta$ as
well. By using the Cramer-Wold's theorem,
the joint convergence in (\ref{joint}) holds and hence the relation
(\ref{limit}) holds. 
\end{proof}}
\bigskip

\begin{proof}[Proof of Theorem \ref{extreme}]
Based on the ratio of $\widehat{q}_{p_{n}}$ and $\widehat{q}_{\tau_{n}}$, we
denote that
\[
\widehat{q}_{p_{n}}^{\ast}=\left(  \frac{1-p_{n}}{1-\tau_{n}}\right)
^{-\widehat{\gamma}}\widehat{q}_{\tau_{n}}.
\]
By the definition of $\widehat{dev}_{p_{n}}^{\ast}$, we have
\begin{align*}
\log \left(  \frac{\widehat{dev}_{p_{n}}^{\ast}}{dev_{p_{n}}}\right)   &
=\log \left(  \frac{\left(  \frac{1-p_{n}}{1-{\tau_{n}}}\right)  ^{-\widehat
{\gamma}}\widehat{dev}_{\tau_{n}}}{{dev}_{p_{n}}}\right)  \\
&  =\log \left(  \frac{\widehat{q}_{p_{n}}^{\ast}}{q_{p_{n}}}\right)
+\log \left(  \frac{\widehat{dev}_{\tau_{n}}}{dev_{\tau_{n}}}\right)
-\log \left(  \frac{\widehat{q}_{\tau_{n}}}{q_{\tau_{n}}}\right)  +\log \left(
\frac{dev_{\tau_{n}}}{q_{\tau_{n}}}\right)  -\log \left(  \frac{dev_{p_{n}}%
}{q_{p_{n}}}\right)  .
\end{align*}
Next, we analyze each term above. By Theorem 4.3.8 of \cite{de2007extreme}, since
$\log \left[  \left(  1-\tau_{n}\right)  /\left(  1-p_{n}\right)  \right]
\rightarrow \infty$, we have
\begin{equation}
\frac{\sqrt{n\left(  1-\tau_{n}\right)  }}{\log \left[  \left(  1-\tau
_{n}\right)  /\left(  1-p_{n}\right)  \right]  }\log \left(  \frac{\widehat
{q}_{p_{n}}^{\ast}}{q_{p_{n}}}\right)  \overset{d}{\longrightarrow}%
\Gamma.\label{t1}%
\end{equation}
By Theorem \ref{int}, we have that as $\log \left[  \left(  1-q_{n}\right)  /\left(
1-p_{n}\right)  \right]  \rightarrow \infty$,
\begin{equation}
\frac{\sqrt{n\left(  1-\tau_{n}\right)  }}{\log \left[  \left(  1-\tau
_{n}\right)  /\left(  1-p_{n}\right)  \right]  }\log \left(  \frac
{\widehat{dev}_{\tau_{n}}}{dev_{\tau_{n}}}\right)  =o_{p}(1).\label{t2}%
\end{equation}
By Theorems 2.4.1 of \cite{de2007extreme}, we have that as $\log \left[  \left(
1-q_{n}\right)  /\left(  1-p_{n}\right)  \right]  \rightarrow \infty$
\begin{equation}
\frac{\sqrt{n\left(  1-\tau_{n}\right)  }}{\log \left[  \left(  1-\tau
_{n}\right)  /\left(  1-p_{n}\right)  \right]  }\log \left(  \frac{\widehat
{q}_{\tau_{n}}}{q_{\tau_{n}}}\right)  =o_{p}(1).\label{t3}%
\end{equation}
Lastly, note that by Theorem \ref{2ndexp}, we have
\begin{align*}
&  \log \left(  \frac{dev_{\tau_{n}}}{q_{\tau_{n}}}\right)  -\log \left(
\frac{dev_{p_{n}}}{q_{p_{n}}}\right)  \\
&  =\log \left(  \frac{\beta_{\gamma}(1+w(\tau_{n}))/q_{\tau_{n}}}%
{\beta_{\gamma}(1+w(p_{n}))/q_{p_{n}}}\right)  \\
&  =\log \left(  1+w(\tau_{n})(1+o(1))-w(p_{n})(1+o(1))\right)  \\
&  =w(\tau_{n})(1+o(1))-w(p_{n})(1+o(1))\\
&  =\xi_{1}q_{\tau_{n}}^{-1}\left(  1-\left(  \frac{1-p_{n}}{1-\tau_{n}%
}\right)  ^{\gamma}\right)  (1+o(1))+\xi_{2}A\left(  (1-\tau_{n})^{-1}\right)
\left(  1-\left(  \frac{1-\tau_{n}}{1-p_{n}}\right)  ^{\rho}\right)
(1+o(1))).
\end{align*}
Then, by $\sqrt{n\left(  1-\tau_{n}\right)  }q_{\tau_{n}}^{-1}\rightarrow
\lambda_{1}\in \mathbb{R}$ and $\sqrt{n\left(  1-\tau_{n}\right)  }%
A((1-\tau_{n})^{-1})\rightarrow \lambda_{2}\in \mathbb{R}$, and $\log \left[
\left(  1-q_{n}\right)  /\left(  1-p_{n}\right)  \right]  \rightarrow \infty$,
we have
\begin{equation}
\frac{\sqrt{n\left(  1-\tau_{n}\right)  }}{\log \left[  \left(  1-\tau
_{n}\right)  /\left(  1-p_{n}\right)  \right]  }\left(  \log \left(
\frac{dev_{\tau_{n}}}{q_{\tau_{n}}}\right)  -\log \left(  \frac{dev_{p_{n}}%
}{q_{p_{n}}}\right)  \right)  =o_{p}(1).\label{t4}%
\end{equation}
Combing (\ref{t1}) to (\ref{t4}), we obtain the desired result.
\end{proof}
\bigskip

{
\begin{proof}[Proof of Lemma \ref{dep int}]
By Proposition 1 of \cite{chavez2018extreme}, as $n\rightarrow \infty$,
\[
\sqrt{k}\left(  \log \frac{Q_{n}(t)}{U\left(  \frac{n}{k}\right)  }+\gamma \log
t\right)  =\gamma t^{-1}e(t)+\sqrt{k}\widetilde{A}\left(  \frac{n}{k}\right)
\frac{t^{-\rho}-1}{\rho}+o_{p}(1)t^{-1/2-\varepsilon},
\]
where $e(t)$ is a centered Gaussian process with covariance function
$r(\cdot,\cdot)$, $\widetilde{A}(x)\sim A(x)$ as $x\rightarrow \infty$ and
$o_{p}(1)$ is uniform for $t\in \lbrack0,1]$. Thus,
\begin{align*}
\sqrt{k}\left(  \widehat{\gamma}_{H}-\gamma \right)   &  =\sqrt{k}\left(
\int_{0}^{1}\log \frac{Q_{n}(t)}{Q_{n}(1)}dt+\int_{0}^{1}\gamma \log tdt\right)
\\
&  =\gamma \int_{0}^{1}t^{-1}e(t)-e(1)dt+\sqrt{k}\widetilde{A}\left(  \frac
{n}{k}\right)  \int_{0}^{1}\frac{t^{-\rho}-1}{\rho}dt+o_{p}(1)\int_{0}%
^{1}t^{-1/2-\varepsilon}dt\\
&  =\gamma \int_{0}^{1}t^{-1}e(t)-e(1)dt+\sqrt{k}\widetilde{A}\left(  \frac
{n}{k}\right)  \frac{1}{1-\rho}+o_{p}(1).
\end{align*}
The asymptotic variance of $\sqrt{k}\left(  \widehat{\gamma}_{H}%
-\gamma \right)$ is calculated as
\begin{align*}
\sigma_{H,\gamma}^{2} &  =var\left(  \gamma \int_{0}^{1}t^{-1}%
e(t)-e(1)dt\right)  \\
&  =\gamma^{2}\int_{0}^{1}\int_{0}^{1}E\left[  \left(  t^{-1}e(t)-e(1)\right)
\left(  s^{-1}e(s)-e(1)\right)  \right]  dsdt\\
&  =\gamma^{2}\left(  \int_{[0,1]^{2}}\left(  st\right)  ^{-1}r(t,s)dsdt-2\int
_{0}^{1}t^{-1}r(t,1)dt+r(1,1)\right)  .
\end{align*}
Since $\sqrt{k}\widetilde{A}\left(  \frac{n}{k}\right)  \rightarrow \lambda$,
we have $\sqrt{k}\left(  \widehat{\gamma}_{H}-\gamma \right)  \rightarrow
N\left(  \frac{\lambda}{1-\rho},\sigma_{H,\gamma}^{2}\right)  $ as
$n\rightarrow \infty$. Similarly, we have \
\begin{align*}
\sqrt{k}\left(  \frac{\widehat{q}_{\tau_{n}}}{q_{\tau_{n}}}-1\right)   
=\sqrt{k}\left(  \frac{Q_{n}(1)}{U\left(  \frac{n}{k}\right)  }-1\right) 
 =\gamma e(1)+o_{p}(1).
\end{align*}
And as $n\rightarrow \infty$, $\sqrt{k}\left(  \frac{\widehat{q}_{\tau_{n}}%
}{q_{\tau_{n}}}-1\right)  \rightarrow N(0,\gamma^{2}r(1,1))$. The convergence of $\sqrt{k}\left(  \widehat{\gamma}_{H}-\gamma,\frac
{\widehat{q}_{\tau_{n}}}{q_{\tau_{n}}}-1\right)  $ holds jointly by having the
same Gaussian process $e$. This verifies the joint convergence of relation \eqref{di}. The covariance of the limiting
distribution can be calculated as
\begin{align*}
\varsigma &  =cov\left(  \gamma \int_{0}^{1}t^{-1}e(t)-e(1)dt,\gamma
e(1)\right)  \\
&  =E\left[  \left(  \gamma \int_{0}^{1}t^{-1}e(t)-e(1)dt\right)  \gamma
e(1)\right]  \\
&  =\gamma^{2}\int_{0}^{1}E\left[  t^{-1}e(t)e(1)-e(1)^{2}\right]  dt\\
&  =\gamma^{2}\left(  \int_{0}^{1}t^{-1}r(t,1)dt-r(1,1)\right)  .
\end{align*}
This ends the proof.
\end{proof}
}

\bigskip
\section*{Acknowledgments}

The authors gratefully acknowledge two anonymous referees for their helpful comments which resulted in a substantially improved version of this article. T. Mao gratefully acknowledges the financial support from the Natural Science Foundation of Anhui Province (No. 2208085MA07) and the National Natural Science Foundation of China (Grants 12371476, 71671176, and 71921001). F.~Yang gratefully acknowledges financial support from the Natural Sciences and Engineering Research Council of Canada (Grant Number: 04242).

	\bibliographystyle{apalike}
	\bibliography{reference}
\end{document}